\documentclass[letterpaper]{article}
\usepackage[T1]{fontenc}
\usepackage[english]{babel}
\usepackage[top=3cm, bottom=3cm, left=3.85cm, right=3.85cm]{geometry}
\usepackage[onehalfspacing]{setspace}
\usepackage{amsmath,amssymb,amsthm,wasysym}
\usepackage{graphicx,hyperref,tikz,tikz-cd}
\usepackage{mathptmx}

\newtheoremstyle{xxx}
  {2ex}{0}
  {}{0pt}{\bf}{.}{ }{}

\theoremstyle{xxx}
\newtheorem{thrm}{Theorem}
\newtheorem{corl}[thrm]{Corollary}
\newtheorem{lmma}[thrm]{Lemma}

\newcommand{\diam}{\textnormal{diam}}
\newcommand{\seq}[1]{\{#1_n\}}
\newcommand{\pfin}[1]{\mathcal{P}_\textnormal{fin}(#1)}

\newcommand{\defn}[1]{\emph{#1}}

\title{Diversities and Conformities\footnote{Research funded in part by NSERC.}}
\author{Andrew Poelstra\footnote{Simon Fraser University, Burnaby, British Columbia, V5A 1S6, Canada. \texttt{asp11@sfu.ca}}}
\date{}
\begin{document}

\maketitle

\begin{abstract} Diversities have recently been developed as multiway
metrics admitting clear and useful notions of hyperconvexity and tight span. In
this note we consider the analytic properties of diversities, in particular the
generalizations of uniform continuity, uniform convergence, Cauchy sequences and
completeness to diversities. We develop conformities, a diversity analogue of
uniform spaces, which abstract these concepts in the metric case.
We show that much of the theory of uniform spaces admits a natural analogue in
this new structure; for example, conformities can be defined either axiomatically
or in terms of uniformly continuous pseudodiversities.

Just as diversities can be restricted to metrics, conformities can be restricted
to uniformities. We find that these two notions of restriction, which are functors
in the appropriate categories, are related by a natural transformation.
\end{abstract}

\section{Introduction}

The theory of metric spaces is well-understood and forms the basis of much of
modern analysis. In 1956, Aronszajn and Panitchpakdi developed the notion of
\emph{hyperconvex} metric spaces \cite{aronszajn+panitchpakdi1956} in order to
apply the Hahn-Banach theorem in a more general setting. In fact, every metric
space can be embedded isometrically in a minimal hyperconvex space, as discovered
by J. R. Isbell \cite{isbell1964} (as the ``hyperconvex hull'') and later by
A. W. M. Dress \cite{dress1984} (as the ``metric tight span'').

These minimal hyperconvex spaces, or \emph{tight spans}, proved to be powerful
tools for the analysis of finite metric spaces. The theory of tight spans, or
\emph{T-theory}, is overviewed in \cite{dress+moulton+terhalle1996}. Its history,
as well as applications to phylogeny, are given in \cite{bryant+tupper2012}.

In light of these applications of T-theory, D. Bryant and P. Tupper developed the
theory of \emph{diversities} alongside an associated tight span theory in
\cite{bryant+tupper2012}. Diversities are multiway metrics mapping
finite subsets of a ground space $X$ to the nonnegative reals. The axioms were
chosen based on their specific applications to phylogeny (where they
had already appeared in special cases) and their ability to admit a tight span theory.
This diversity tight span theory contains the metric tight span theory as a special
case (using so-called diameter diversities), but also allows new behavior which may
be useful in situations such as microbial phylogeny, where the idea of a historical
``phylogenetic tree'' does not make sense. Several examples, along with pictures,
of this phenomenon are given in \cite{bryant+tupper2012}.

A classic paper by Andr\'e Weil \cite{weil1937} developed the theory of \emph{uniform
spaces}, which generalize metric spaces. Uniform spaces admit notions of uniform
continuity, uniform convergence and completeness which coincide with the standard
notions when metric spaces are considered as uniform spaces. This theory has been
described in Bourbaki's \emph{General Toplogy} \cite{bourbaki1989} as well as John
Kelley's classic text \cite{kelley1955}. The metric topology can be derived purely
from properties of the uniform space (via the so-called \emph{uniform topology}),
and in this sense uniform spaces lie ``between'' metric spaces and topologies.

In this note, we develop \emph{conformities}, which generalize diversities in
analogy to Weil's uniform space generalization of metrics. We will describe
uniform continuity, uniform convergence, Cauchy sequences and completeness
for diversities, and show that these can be characterized in terms of
conformities, giving an abstract framework in which to analyze the uniform
structure of diversities. This is motivated by the observation that while
diversities generalize metric spaces in a straightforward way (in fact they
restrict to metric spaces), they can exhibit very non-smooth behavior with
respect to these spaces (c.f. our Theorem \ref{steiner-nonsense}). Therefore
the existing tools for metric spaces are insufficient to get a handle on the
behavior of diversities.


\section{Preliminaries}

Throughout this paper, we will denote the \emph{finite power set} of a 
given set $X$ by
\[ \pfin{X} = \{ A\subseteq X: |A|<\infty\}. \]

We begin with the Bryant-Tupper definition from \cite{bryant+tupper2012}: a
\defn{diversity} is a pair $(X,\delta)$ where $X$ is some set and $\delta:\pfin{X}\to\mathbb{R}$
is a function satisfying
\begin{enumerate}
\item[D1.] If $A\in\pfin{X}$, $\delta(A) \geq 0$ and $\delta(A) = 0$ iff $|A| \leq 1$.
\item[D2.] If $A,B,C\in\pfin{X}$ with $C\neq\varnothing$, then
\[ \delta(A \cup B) \leq \delta(A \cup C) + \delta(C \cup B) \]
\end{enumerate}
If for some $A\in\pfin{X}$, $\delta(A) = 0$ but $|A| > 1$ we have the weaker
notion of a \defn{pseudodiversity}. It is shown in \cite{bryant+tupper2012} from
these axioms that if $A\subseteq B$ then $\delta(A) \leq \delta(B)$, i.e.,
(pseudo)diversities are monotonic, and that the restriction of a diversity to
sets of size 2 forms a pseudometric $d(x,y) = \delta(\{x,y\})$. We call this metric
the \defn{induced metric} of the diversity.

For a metric space $(X,d)$, there are two important diversities on $X$
having $d$ as an induced metric:
\begin{itemize}
\item The \defn{diameter diversity} $(X,\diam_d)$ defined by
\[ \diam_d(\{x_1,\ldots,x_n\}) = \sup_{i,j} d(x_i, x_j) \]
When $X = \mathbb{R}^n$ and $d$ is the Euclidean metric we refer to
this diversity simply by $\diam$.
\item The \defn{Steiner tree diversity} $(X,\delta)$ defined for each
finite set $A\subseteq X$ as the infimum of the size of the minimum
Steiner tree on $A$.

(Recall that a Steiner tree on $A$ is a tree whose vertex set $V$ satisfies
$A\subseteq V\subseteq X$, with each edge $(x,y)$ weighted by $d(x,y)$. The
size of the tree is the sum of its edge weights.)
\end{itemize}

In fact, these examples are the extremes of diversity behavior relative to
their induced metrics, in the sense that for any diversity $(X,\delta')$ which
induces a metric $d$, we have
\[ \diam_d \leq \delta' \leq \delta \]
where $\delta$ is the Steiner tree diversity on $(X,d)$. This can be shown by
a straightforward argument\footnote{Bryant and Tupper, upcoming.}.

To demonstrate the difference between the diameter and Steiner tree diversities,
consider the Euclidean metric $(\mathbb{R}^3,d)$.
The induced metric of both the diameter and Steiner tree diversity is the
Euclidean metric. For any finite set $A$ contained in an
$\epsilon$-ball, $\diam(A) < \epsilon$. To contrast, in any $\epsilon$-ball
we can find finite sets $A$ for which $\delta(A)$ is arbitrarily large:
\begin{thrm}\label{steiner-nonsense} The Steiner tree diversity function
$\delta$ on $\mathbb{R}^3$ is unbounded on every open set of the Euclidean
topology.
\end{thrm}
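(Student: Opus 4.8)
The plan is to reduce the statement to a single $\epsilon$-ball and then, inside that ball, to exhibit finite sets whose Steiner tree diversity is arbitrarily large. Every nonempty open set of the Euclidean topology contains some open ball $B(x_0,\epsilon)$, so it suffices to show that $\delta$ is unbounded on an arbitrary such ball. The governing intuition is that although all points of a finite set $A\subseteq B(x_0,\epsilon)$ are mutually close (so $\diam_d(A)<2\epsilon$ stays bounded), the minimal tree that must \emph{connect} all of them can be forced to be long by using many points: if we pack $n$ points into the ball while keeping them pairwise separated by at least $\rho$, then the total length of any connecting tree grows like $n\rho$, and we will be able to drive $n\rho\to\infty$ even as the points remain in $B(x_0,\epsilon)$.

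To make this precise I would first isolate the following lower bound, which is the heart of the argument: if $A$ is a finite set with $|A|=n$ and minimum pairwise distance $\rho=\min_{a\neq a'\in A}d(a,a')$, then every Steiner tree on $A$ has length at least $n\rho/2$, and hence $\delta(A)\geq n\rho/2$. I would prove this by the standard doubling-and-shortcutting device: given a Steiner tree $T$ of length $L$ (with Steiner points anywhere in $\mathbb{R}^3$), double each edge so that every vertex has even degree, take an Eulerian circuit of length $2L$, and then shortcut it to a cyclic tour through the $n$ terminals, which by the triangle inequality has length at most $2L$. That tour consists of $n$ edges, each joining two distinct points of $A$ and so of length at least $\rho$, whence $n\rho\leq 2L$. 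Since this holds for \emph{every} Steiner tree, the infimum defining $\delta(A)$ is bounded below by $n\rho/2$. I expect this lower bound to be the main obstacle, precisely because one must rule out the possibility that cleverly placed Steiner points shorten the tree below the packing estimate; the shortcutting argument handles this uniformly, regardless of where the Steiner points lie.

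Finally I would choose the sets realizing the bound. Inscribe in $B(x_0,\epsilon)$ a cube of side $s=2\epsilon/\sqrt{3}$ and, for each $k$, let $A_k$ be the $k\times k\times k$ grid of $n=k^3$ equally spaced points in this cube, so that the minimum pairwise distance is $\rho=s/(k-1)\geq s/k$. The lower bound then gives
\[ \delta(A_k)\;\geq\;\frac{n\rho}{2}\;\geq\;\frac{k^3\cdot (s/k)}{2}\;=\;\frac{s\,k^2}{2}\;\xrightarrow[k\to\infty]{}\;\infty. \]
Since each $A_k\subseteq B(x_0,\epsilon)$, the function $\delta$ is unbounded on $B(x_0,\epsilon)$, and therefore on the original open set. (Note that the same construction works already with a planar grid, so the phenomenon is not special to three dimensions; it fails only in $\mathbb{R}^1$, where a connecting tree never exceeds the diameter.)
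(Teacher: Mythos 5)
Your proof is correct, and it reaches the same conclusion by a genuinely different route at the key step. The paper also uses cubical grids, but it bounds the Steiner tree from below by first bounding the \emph{minimum spanning tree} ($n^3-1$ edges, each at least the grid spacing) and then invoking the cited Du--Hwang lower bound of $0.615$ on the Steiner ratio to pass from spanning trees to Steiner trees; that citation is doing real work, since a priori cleverly placed Steiner points might shorten the tree below any spanning-tree estimate. You avoid the citation entirely: your doubling/Euler-circuit/shortcutting lemma, $\delta(A)\geq n\rho/2$ for an $n$-point set with minimum pairwise separation $\rho$, bounds \emph{every} Steiner tree directly, wherever its Steiner points lie, so your argument is elementary and self-contained. (An equivalent packing argument: the open balls of radius $\rho/2$ about the points of $A$ are disjoint, and any connected set containing $A$ and at least two points has length at least $\rho/2$ inside each.) The constructions also differ cosmetically: the paper shrinks the grids ($n^3$ points at spacing $1/n^2$ inside $[0,1/n]^3$) so that every ball about the centre eventually contains some $G_n$, while you fix a cube inscribed in the given ball and refine the grid; both work. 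Two trivial points: the corners of your inscribed cube of side $2\epsilon/\sqrt{3}$ lie on the boundary sphere rather than in the open ball, so take $s$ slightly smaller; and your closing parenthetical is right that the same packing bound already gives unboundedness in $\mathbb{R}^2$ (length at least $k^2(s/k)/2=sk/2\to\infty$), which is in fact a stronger statement about the planar case than the remark the paper makes after this theorem.
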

\begin{proof} Without loss of generality, we show the result for
$\epsilon$-balls about 0. For each $n\in\mathbb{N}$, define
\[ G_n = \left\{ \left(\frac{i}{n^2}, \frac{j}{n^2}, \frac{k}{n^2}\right) : 0 \leq i,j,k < n \right\} \]
which is a grid of points contained in the cube $[0,1/n]^3$. Since there are
$n^3$ points, a minimum spanning tree connecting the members of $G_n$ must
have $n^3 - 1$ edges, each of length $\geq 1/n^2$, since that is the least
distance between two points. Therefore the size of the minimum spanning tree
on $G_n$ is at least $(n^3 - 1)/n^2$, which can be taken as large as we like
by taking $n$ large enough. Since the minimal Steiner tree on $G_n$ has size
at least 0.615 times that of the minimal spanning tree \cite{du1991}, we
have $\delta(G_n)\to\infty$ as $n\to\infty$ even though $\diam(G_n)\to0$.
\end{proof}

A similar construction for the Steiner tree diversity on $\mathbb{R}^2$
gives sets of diversity $(0.615 - \epsilon)$ for every $\epsilon > 0$ in
every Euclidean ball. On $\mathbb{R}$ the Steiner tree diversity and
diameter diversity are identical.
The dramatic difference between the many-point behavior of these two diversities
in dimension 2 or higher demonstrates that diversities are not characterized by
their induced metrics, even up to a constant.


In this section and the next, we will define uniform convergence, uniform
continuity and completeness explicitly in terms of an underlying diversity;
in Section~\ref{conf} we will describe \emph{conformities}, which abstract these
properties for diversities.This is in analogy to Weil's uniformities, which
abstract the same concepts for metric spaces.

With this goal in mind, we start with the following definitions: let $(X,\delta_X)$
and $(Y,\delta_Y)$ be diversities. Given $x\in X$, a sequence $\{x_n\} \subset X$
\defn{converges to $x$}, denoted $x_n\to x$, if
\[ \lim_{N\to\infty}\sup_{i_1,i_2,\ldots,i_n\geq N} \delta_X(\{x, x_{i_1}, x_{i_2},\ldots,x_{i_n}\}) = 0 \]
The sequence $\{x_n\}$ is a \defn{Cauchy sequence} if
\[ \lim_{N\to\infty}\sup_{i_1,i_2,\ldots,i_n\geq N} \delta_X(\{x_{i_1}, x_{i_2},\ldots,x_{i_n}\}) = 0 \]
It is an easy consequence of these definitions and the diversity axioms
that limits are unique and every convergent sequence is Cauchy. If every
Cauchy sequence is convergent, we call the diversity \defn{complete}.

Finally, if $f:X\to Y$ is a function such that for every $\epsilon > 0$,
there exists some $d > 0$ such that $\delta_X(A) < d \implies \delta_Y(f(A))
< \epsilon$ for every $A\in\pfin{X}$, we say $f$ is \defn{uniformly
continuous}.

It is not hard to see that for diameter diversities, these definitions
coincide exactly with the standard ones on the induced metric.

For the second half of the paper, we will work extensively with filters,
so we state the definition here:
given a ground set $X$, define a \defn{filter} as a collection $\mathcal{F}$
of subsets of $X$ satisfying $A\cap B$ whenever $A$, $B$ are in $\mathcal{F}$,
and $B\in\mathcal{F}$ whenever $B\supseteq A$ and $A\in\mathcal{F}$. A
\defn{filter base} becomes a filter when all supersets of its elements are
added, in which case we say the base \defn{generates} the filter.

In this paper, we additionally require that $\varnothing\notin\mathcal{F}$.

\section{Comparison with metrics}

In this section, we contrast the uniform convergence of sequences with
respect to diversities and their induced metrics. In particular, we show that although the
Cauchy property for sequences is much stronger for diversities (we
demonstrate a sequence which is not Cauchy with respect to a diversity,
even though it is Cauchy with respect to the induced metric), completeness
of a diversity is equivalent to completeness of its induced metric.
This tells us that every diversity which induces a Euclidean metric (e.g.,
the Steiner tree diversity on $\mathbb{R}^n$) is complete.

Since the set of Cauchy sequences in a diversity may be smaller than
the set of Cauchy sequences of its induced metric, this may provide a
simpler way to determine completeness of metric spaces.
%
%

At the end of the section, we construct the analogue of completion for
diversities.

\subsection{Completeness in diversities and metric spaces}

\begin{thrm}Let $(X,\delta)$ be a diversity, $d$ its induced metric.
If $(X,d)$ is a complete metric space, then $(X,\delta)$ is a complete
diversity.\end{thrm}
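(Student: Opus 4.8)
The plan is to reduce diversity completeness to metric completeness. The key structural fact to exploit is that the diversity Cauchy condition is strictly stronger than the metric Cauchy condition, because the supremum defining it ranges over \emph{all} finite subsets of the tail — in particular over two-element subsets, on which $\delta$ agrees with the induced metric $d$.

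First I would take an arbitrary diversity-Cauchy sequence $\{x_n\}$ and observe that it is automatically metric-Cauchy. For indices $i,j \geq N$ we have
\[ d(x_i,x_j) = \delta(\{x_i,x_j\}) \leq \sup_{i_1,\ldots,i_k \geq N}\delta(\{x_{i_1},\ldots,x_{i_k}\}), \]
and the right-hand side tends to $0$ by hypothesis; hence $\{x_n\}$ is Cauchy in $(X,d)$. By completeness of the metric space there is a point $x\in X$ with $x_n \to x$ in $d$.

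The main step — and the main obstacle — is upgrading this metric limit to a diversity limit, i.e.\ showing $\lim_{N\to\infty}\sup_{i_1,\ldots,i_k \geq N}\delta(\{x,x_{i_1},\ldots,x_{i_k}\}) = 0$. Here I would apply axiom D2 with a single tail point as the connecting set: taking $A=\{x\}$, $B=\{x_{i_1},\ldots,x_{i_k}\}$ and $C=\{x_{i_1}\}$ yields
\[ \delta(\{x,x_{i_1},\ldots,x_{i_k}\}) \leq \delta(\{x,x_{i_1}\}) + \delta(\{x_{i_1},\ldots,x_{i_k}\}) = d(x,x_{i_1}) + \delta(\{x_{i_1},\ldots,x_{i_k}\}). \]
This splits the estimate into two pieces that are each controlled for large indices: the first by the metric convergence $x_n \to x$, the second by the diversity-Cauchy property of the sequence itself.

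To finish, given $\epsilon > 0$ I would choose $N$ large enough that simultaneously $d(x,x_i) < \epsilon/2$ for all $i \geq N$ and $\sup_{i_1,\ldots,i_k \geq N}\delta(\{x_{i_1},\ldots,x_{i_k}\}) < \epsilon/2$. Then every finite tail subset satisfies $\delta(\{x,x_{i_1},\ldots,x_{i_k}\}) < \epsilon$, so the supremum over such subsets is at most $\epsilon$; since this supremum is non-increasing in $N$, the limit is $0$ and $x_n \to x$ in the diversity. The one point that needs care is that the bound must be uniform over the cardinality $k$ of the subset, which it is: the displayed split holds for every $k$ with the same two controlling quantities.
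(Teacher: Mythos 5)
Your proposal is correct and follows essentially the same route as the paper's own proof: pass to the induced metric via two-element subsets, obtain the metric limit $x$, and then control $\delta(\{x,x_{i_1},\ldots,x_{i_k}\})$ by applying D2 with the singleton $\{x_{i_1}\}$ as the connecting set, splitting the estimate into $d(x,x_{i_1})$ plus the diversity-Cauchy tail bound. The only difference is that you make explicit the uniformity over the cardinality $k$, which the paper leaves implicit.
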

\begin{proof} Suppose that $(X,d)$ is complete. Let $\{ x_n \}$ be a Cauchy
sequence in $(X,\delta)$. Then it is also Cauchy in $(X,d)$, and therefore
converges to some element $x$. We claim that $x_n\to x$ in $(X,\delta)$.
To this end, let $\epsilon > 0$. Then there exists $N$ such that:
\begin{itemize}
\item $d(x_n,x) < \epsilon$ for all $n > N$ (since $x_n\to x$ in $(X,d)$)
\item $\delta (\{x_{n_1},x_{n_2},\ldots,x_{n_m}\}) < \epsilon$ for all $n_i > N$
(since $\{ x_n \}$ is Cauchy in $(X,\delta)$).
\end{itemize}
Therefore, for all $n_1,\ldots,n_m > N$,
\begin{align*}
\delta(\{x, x_{n_1},\ldots, x_{n_m}\}) &\leq \delta(\{ x, x_{n_1}\}) + \delta(\{x_{n_1},\ldots, x_{n_m}\})\\
  &= d(x, x_{n_1}) + \delta(\{x_{n_1},\ldots, x_{n_m}\}) < 2\epsilon
\end{align*}
i.e., $x_n\to x$ in $(X,\delta)$.
\end{proof}

As mentioned, the set of Cauchy sequences in a diversity may be strictly
smaller than the set of Cauchy sequences in the induced metric. For example,
let $(X,\delta)$ be the Steiner tree diversity on $\mathbb{R}^3$, and
consider the sets $\{G_n\}_{n\in\mathbb{N}}$ from Theorem \ref{steiner-nonsense}.

Order each set $G_n$ somehow and define the sequence $\{x_i\}$ by
concatenating them, i.e.,
\[ \{ x_n \} = G_1G_2G_3\cdots \]
which is Cauchy in the induced metric of $(X,\delta)$ (since eventually every pair
of points is confined to arbitrarily small cubes $[0,\epsilon]^3$). However, it
is not Cauchy in $(X,\delta)$, since we saw in the proof of Theorem
\ref{steiner-nonsense} that $\delta(G_n)$ becomes arbitrarily large
as $n\to\infty$. In other words, every tail of $\{x_n\}$ has
arbitrarily large finite sets, so $\{x_n\}$ is not Cauchy.

In light of this example, it is interesting to know that every complete diversity
has a complete induced metric, which is proved with the following Lemma:

\begin{lmma} Let $(X, \delta)$ be a diversity, $d$ its induced metric.
Let $\{ x_n \}$ be Cauchy in $(X, d)$. Then it has a subsequence that
is Cauchy in $(X,\delta)$.\label{claus}\end{lmma}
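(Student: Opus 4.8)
The plan is to extract a subsequence along which the diversity-Cauchy condition holds, exploiting the fact that metric-Cauchyness gives us fast control over pairwise distances, and that the diversity axioms let us bound the diversity of a finite set by a sum of pairwise terms along a ``path.'' First I would use the metric-Cauchy hypothesis to pass to a subsequence $\{y_k\}$ with geometrically decaying tails: for instance, choose indices so that $d(y_i, y_j) < 2^{-k}$ whenever $i, j \geq k$. This is the standard trick for building rapidly-Cauchy subsequences, and it guarantees that consecutive terms satisfy $d(y_k, y_{k+1}) < 2^{-k}$, so that $\sum_k d(y_k, y_{k+1})$ converges.

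The key step is then to bound $\delta(\{y_{k_1}, y_{k_2}, \ldots, y_{k_m}\})$ for indices $k_1 < k_2 < \cdots < k_m$ all at least $N$. The natural tool is the diversity analogue of the triangle inequality (axiom D2). Applying D2 repeatedly, I expect to be able to ``telescope'' the diversity of a finite set into a sum of induced-metric distances between successive points, namely a bound of the form
\[ \delta(\{y_{k_1}, \ldots, y_{k_m}\}) \leq \sum_{j=1}^{m-1} d(y_{k_j}, y_{k_{j+1}}). \]
Granting such an inequality, the geometric decay makes the right-hand side at most $\sum_{j \geq N} 2^{-j}$, which tends to $0$ as $N \to \infty$. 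This is exactly the diversity-Cauchy condition for the subsequence.

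The main obstacle is establishing the telescoping inequality cleanly from D2. The subtlety is that D2 splits $\delta(A \cup B)$ through an intermediate nonempty set $C$, so to peel off one point at a time I would write $\delta(\{y_{k_1}, \ldots, y_{k_m}\}) = \delta(\{y_{k_1}\} \cup \{y_{k_2}, \ldots, y_{k_m}\})$ and apply D2 with $C = \{y_{k_2}\}$, splitting off the first point at a cost of $\delta(\{y_{k_1}, y_{k_2}\}) = d(y_{k_1}, y_{k_2})$ and leaving $\delta(\{y_{k_2}, \ldots, y_{k_m}\})$ to handle inductively. I would carry this out by induction on $m$, taking care that the induction hypothesis applies to the reduced set; the base case $m = 2$ is just $\delta(\{y_{k_1}, y_{k_2}\}) = d(y_{k_1}, y_{k_2})$ by definition of the induced metric. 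The only thing to verify is that the sum one obtains is dominated by the tail of a convergent series, which the geometric choice of subsequence guarantees, so the supremum over all such index tuples drawn from indices $\geq N$ still goes to $0$.
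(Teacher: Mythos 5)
Your proposal is correct and follows essentially the same route as the paper: pass to a subsequence whose tails have diameter at most $2^{-i}$, then telescope $\delta$ of any finite subset into a sum of consecutive pairwise distances via repeated application of D2 with a singleton as the intermediate set, and bound that sum by the tail of a geometric series. The inductive peeling argument you sketch is exactly what justifies the chain inequality the paper uses in its first displayed line.
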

\begin{proof} Define the subsequence $\{ x_{n_i} \}$ by
\[ n_i = \min \{ n: d(x_n, x_m) < 2^{-i} \text{ for all $m \geq n$} \} \]
Given $\epsilon > 0$, choose $N$ such that $2^{1-N} < \epsilon$.
Then for all $i_1 \leq i_2 \leq \cdots \leq i_m$ greater than $N$,
\begin{align*}
\delta(\{ x_{i_1}, \ldots, x_{i_m} \})
  &\leq \delta(\{x_{i_1}, x_{i_2}\}) + \cdots + \delta(\{x_{i_{m-1}}, x_{i_m}\})	\\
  &< 1/2^{i_1} + \cdots + 1/2^{i_m}	\\
  &< \sum_{i = N}^\infty 1/2^i	\\
  &= 2^{1-N} < \epsilon
\end{align*}
That is, $\{ x_{n_i} \}$ is Cauchy in $(X,\delta)$.
\end{proof}

\begin{thrm}Let $(X,\delta)$ be a diversity, $d$ its induced metric.
If $(X,\delta)$ is a complete diversity, then $(X,d)$ is a complete
metric space.\end{thrm}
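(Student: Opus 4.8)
The plan is to reduce completeness of $(X,d)$ to the completeness of $(X,\delta)$ by means of the subsequence machinery in Lemma~\ref{claus}. Concretely, I would start with an arbitrary sequence $\{x_n\}$ that is Cauchy in the induced metric $(X,d)$ and aim to produce a single point $x\in X$ to which it converges in $d$. The whole argument rests on the fact that although a $d$-Cauchy sequence need not be $\delta$-Cauchy (as the Steiner-tree example in the preceding discussion shows), Lemma~\ref{claus} guarantees a $\delta$-Cauchy subsequence, and completeness of the diversity then supplies a limit for that subsequence.

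First I would invoke Lemma~\ref{claus} to extract a subsequence, which for clarity I relabel $\{y_i\}$, that is Cauchy in $(X,\delta)$. Since $(X,\delta)$ is assumed complete, $\{y_i\}$ converges in the diversity sense to some $x\in X$. The next step is to observe that diversity convergence implies convergence in the induced metric: by definition of $d$ we have $d(x,y_i)=\delta(\{x,y_i\})$, and this single term is bounded above by
\[ \sup_{j_1,\ldots,j_k\ge i}\delta(\{x,y_{j_1},\ldots,y_{j_k}\}), \]
which tends to $0$ as $i\to\infty$ precisely because $y_i\to x$ in $(X,\delta)$. Hence $y_i\to x$ in $(X,d)$ as well.

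Finally I would close the gap between the subsequence and the full sequence using the standard metric fact that a Cauchy sequence possessing a convergent subsequence converges to the same limit. Given $\epsilon>0$, I would choose $i$ large enough that $d(y_i,x)<\epsilon$ and simultaneously large enough that the metric-Cauchy property of $\{x_n\}$ forces $d(x_n,x_m)<\epsilon$ for all indices past that of $y_i$; the triangle inequality then gives $d(x_n,x)<2\epsilon$ for all sufficiently large $n$. This shows $x_n\to x$ in $(X,d)$, so every $d$-Cauchy sequence converges and $(X,d)$ is complete.

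I expect no serious obstacle here, since the delicate content is entirely front-loaded into Lemma~\ref{claus}, which already handles the passage from a metric-Cauchy sequence to a diversity-Cauchy subsequence. The only point needing a word of care is the implication ``diversity convergence $\Rightarrow$ metric convergence,'' but this is immediate because the pairwise value $\delta(\{x,y_i\})$ is one of the terms controlled by the defining supremum for diversity convergence.
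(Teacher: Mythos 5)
Your proposal is correct and follows essentially the same route as the paper's own proof: extract a $\delta$-Cauchy subsequence via Lemma~\ref{claus}, obtain its limit from completeness of the diversity, note that diversity convergence implies convergence in the induced metric, and finish with the standard fact that a metric-Cauchy sequence with a convergent subsequence converges. The only difference is that you spell out the ``diversity convergence $\Rightarrow$ metric convergence'' step, which the paper dispatches with a parenthetical remark.
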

\begin{proof} Let $\{ x_n \}$ be a Cauchy sequence in $(X, d)$. Then
by Lemma \ref{claus} it has a subsequence $\{ x_{i_n} \}$ that is Cauchy
in $(X,\delta)$, which converges to some element $x$ since the diversity
is complete. (It converges in both $(X,\delta)$ and $(X,d)$.)

Then $x_n$ converges to $x$ in $(X,d)$, since for any $\epsilon$ we have
$d(x_n, x) \leq d(x_n, x_{i_m}) + d(x_{i_m}, x) < 2\epsilon$ for $m,n$
large enough.
\end{proof}

\subsection{Completion\label{sect-comp}}

In light of the equivalence between metric completeness and diversity
completeness, it is perhaps not so surprising that every diversity can
be completed in a canonical way. To do so, we require two more definitions
from \cite{bryant+tupper2012}: an \defn{embedding} $\pi:Y_1\to Y_2$ is
an injective map between diversities $(Y_1,\delta_1)$ and $(Y_2,\delta_2)$
such that $\delta_1(A) = \delta_2(\pi(A))$ for all $A\in\pfin{Y_1}$. A
\defn{isomorphism} is a surjective embedding.

\begin{thrm}Every diversity $(X,\delta)$ can be embedded in a complete diversity.\end{thrm}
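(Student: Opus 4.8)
The plan is to mimic the classical metric-space completion: build the completion out of Cauchy sequences modulo an appropriate equivalence, define a diversity on the resulting set, and embed the original diversity via constant sequences. The whole construction is guided by the fact established earlier, that a diversity is complete exactly when its induced metric is complete, so the metric-completion intuition should carry through with care taken to handle the many-point values.

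First I would let $\mathcal{C}$ be the set of all Cauchy sequences in $(X,\delta)$. On $\mathcal{C}$ I would define $\{x_n\}\sim\{y_n\}$ to mean that the interleaved sequence $x_1,y_1,x_2,y_2,\ldots$ is also Cauchy; using the subadditivity axiom D2 one checks this is an equivalence relation (transitivity is the only nonobvious part, and it follows by splitting a diversity value across a shared point from the middle sequence). Let $X^*$ be the set of equivalence classes. To define $\delta^*$ on a finite collection of classes $[\{x^1_n\}],\ldots,[\{x^k_n\}]$, I would set
\[ \delta^*(\{[\{x^1_n\}],\ldots,[\{x^k_n\}]\}) = \lim_{N\to\infty}\sup_{i_1,\ldots,i_k\geq N}\delta(\{x^1_{i_1},\ldots,x^k_{i_k}\}). \]
The key steps are then: (i) show this limit exists — the relevant suprema are nonincreasing in $N$ because each is a sup over a smaller tail, and they are bounded below by $0$, so the limit exists; (ii) show the value is independent of the chosen representatives, which again uses D2 together with the defining Cauchy/equivalence conditions to absorb the discrepancy between two representatives into an arbitrarily small error; (iii) verify D1 and D2 for $\delta^*$, where D2 passes to the limit termwise and D1 requires that two inequivalent classes have positive $\delta^*$, which holds because inequivalence of $\{x_n\}$ and $\{y_n\}$ forces $\limsup\delta(\{x_n,y_n\})>0$.

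Next I would define the embedding $\pi:X\to X^*$ by sending $x$ to the class of the constant sequence $(x,x,\ldots)$. That $\pi$ is injective and satisfies $\delta^*(\pi(A))=\delta(A)$ for every $A\in\pfin{X}$ is immediate from the definition of $\delta^*$ applied to constant sequences, so $\pi$ is an embedding in the sense defined just above the statement. Finally I would prove $(X^*,\delta^*)$ is complete: by the earlier theorems it suffices to show the induced metric $d^*$ is complete, or one can argue directly that a Cauchy sequence of classes can be represented by a ``diagonal'' sequence $\{x^n_{m(n)}\}$ in $X$ whose class is the desired limit, with $m(n)$ chosen large enough that the diagonal tracks each class closely; a standard diagonal-extraction estimate using D2 shows this diagonal is Cauchy in $(X,\delta)$ and that the classes converge to it in $\delta^*$.

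The main obstacle I expect is the well-definedness of $\delta^*$ in step (ii): unlike the metric case, where only pairwise distances between representatives matter, here I must control $\delta$ on finite sets drawn from $k$ different sequences simultaneously, and swapping one representative for an equivalent one must be shown to change the value by an amount that vanishes as $N\to\infty$. The right tool is repeated application of D2 to insert points of one representative between points of the other and bound the defect by terms of the form $\delta(\{x_{i},y_{i}\})$, each of which is small for large indices precisely because the two sequences are equivalent; organizing this telescoping cleanly, while keeping all indices in the tail, is the delicate bookkeeping at the heart of the argument.
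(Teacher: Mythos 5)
Your construction is the same as the paper's: the set of Cauchy sequences modulo an equivalence (your interleaving criterion is equivalent, given D2, to the paper's condition $\lim_n\delta(\{x_n,y_n\})=0$), the same $\limsup$-over-tails formula for $\hat{\delta}$, and the embedding by constant sequences. The paper dismisses the verification as ``an exercise in notation,'' whereas you correctly identify and outline the genuinely delicate points (well-definedness across representatives and Cauchyness of the diagonal), so your proposal is a sound, slightly more detailed version of the same argument.
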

\begin{proof} Let $\hat{X}$ be the set of all Cauchy sequences in $X$. Identify
any two sequences $\{x_i\}, \{ y_i \}$ which satisfy $\lim_{n\to\infty}\delta(\{x_n,y_n\})=0$
(so $\hat{X}$ is actually a set of equivalence classes). Define the function
$\hat{\delta}$ from $\mathcal{P}_\text{fin}(\hat{X})\to\mathbb{R}$ by

\[ \hat{\delta}(\{ \{x^1_i\}, \{x^2_i\}, \ldots, \{x^n_i\} \}) = \lim_{N\to\infty} \sup_{i_1,\ldots,i_n\geq N} \delta(\{ x^1_{i_i}, x^2_{i_2},\ldots, x^n_{i_n} \}) \]

It can then be shown that $(\hat{X}, \hat{\delta})$ is a complete diversity,
and that the map $x \mapsto \{ x, x, x, \ldots \}$ from $(X,\delta)$ is an
embedding. The proof is an exercise in notation.
\end{proof}

This completion is dense in the sense that every member $x$ of $\hat{X}$ has a
sequence $\{x_i\}\subseteq X$ with $x_i\to x$ in $\hat{X}$. (Let $\{y_i\}$ be
a representative of $x$ and define $x_i = \{y_1,y_2,\ldots,y_i,y_i,y_i,\ldots\}$.)
It also satisfies a universal property analogous to that for metric completion:
\begin{thrm} Let $(X,\delta)$ be a diversity, $(\hat{X}, \hat{\delta})$ its
completion. Then for any complete diversity $(Y, \gamma)$ and any uniformly
continuous function $f:X\to Y$, there is a unique uniformly continuous
function $\hat{f}:\hat{X}\to Y$ which extends $f$.\end{thrm}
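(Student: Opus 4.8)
The plan is to construct $\hat{f}$ using the density of $X$ in $\hat{X}$, show it is well-defined and uniformly continuous, and then argue uniqueness. Let me sketch the standard approach adapted to diversities.
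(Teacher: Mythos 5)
What you have written is a statement of intent, not a proof: none of the three steps you name is actually carried out, and each one contains a diversity-specific subtlety that the outline does not touch. Concretely: (1) To define $\hat f$ on a point of $\hat X$ represented by a Cauchy sequence $\{x_i\}$, you want to set $\hat f(\{x_i\}) = \lim_{i\to\infty} f(x_i)$; but you must first show that $\{f(x_i)\}$ is Cauchy in $(Y,\gamma)$ \emph{in the diversity sense} (i.e.\ that $\gamma$ of arbitrary finite subsets of tails goes to $0$, not just pairwise distances), which requires applying the uniform continuity of $f$ to finite subsets of tails of $\{x_i\}$, and then that the limit is independent of the chosen representative. (2) Uniform continuity of $\hat f$ is the step where the diversity structure really enters: given $\epsilon>0$, choose $d>0$ with $\gamma(f(A))<\epsilon$ whenever $\delta(A)<d$; for a finite set $B=\{\{x_i^1\},\dots,\{x_i^m\}\}\subseteq\hat X$ with $\hat\delta(B)<d/2$, one must pass from the bound on $\hat\delta(B)$ (a limit of suprema over tails) to a bound $\delta(\{x_N^1,\dots,x_N^m\})<d$ for large $N$, and then let $N\to\infty$ to control $\gamma(\hat f(B))$. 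Your outline gives no indication of how the bound on $\hat\delta$ of a set of equivalence classes is converted into a bound on $\delta$ of a finite set of actual points of $X$. (3) Uniqueness does follow from density plus uniform continuity, but you need to state and use the fact that every $x\in\hat X$ is the limit (in the diversity sense of convergence defined in the paper) of a sequence from $X$, and that a uniformly continuous map between diversities sends convergent sequences to convergent sequences with the image of the limit as limit. The paper's proof is short but does supply each of these ingredients; as it stands your proposal supplies none of them and cannot be evaluated as correct or incorrect beyond saying the overall strategy is the right one.
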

\begin{proof} Let $\{x_i\}$ be a representative sequence of some member of
$\hat{X}$, and define $\hat{f}(\{x_i\}) = \lim_{i\to\infty} f(x_i)$, which
is defined and independent of representative since $f$ is uniformly continuous
and $Y$ is complete. To show $\hat{f}$ is uniformly continuous, pick $\epsilon
> 0$ and $d > 0$ such that $\gamma(f(A)) < \epsilon$ whenever $\delta(A) < d$
for all $A\in\pfin{X}$. Then for all $B=\{\{x_i^1\},\{x_i^2\},\ldots,\{x_i^m\}\}
\in\pfin{\hat{X}}$ with $\hat{\delta}(B) < d/2$, we have $\gamma(\hat{f}(B))
= \gamma( \{\lim_{i\to\infty} f(x^n_i)\}_{n=1}^{m})) < \epsilon$ since for
large enough $N$, $\delta(\{f(x^n_N)\}_{n=1}^{m}) < 3d/4$.

To show uniqueness of $\hat{f}$, let $\hat{g}$ be another uniformly continuous
function extending $f$ to $\hat{X}$. For all $x\in\hat{X}$ we have $\{x_i\}
\subset X$ with $x_i\to x$ in $\hat{X}$, and by uniform continuity $\hat{g}(x) =
\lim_{i\to\infty} f(x_i) = \hat{f}(x)$.
\end{proof}

This is a universal property in the sense that for every complete diversity
$\hat{X}'$ extending $X$ and having the property, there is an isomorphism
$\tilde{j}:\hat{X}'\to\hat{X}$. (Specifically, let $\tilde{j}$ be the unique
uniformly continuous extension of the identity map $j:X\to\hat{X}$ to $\hat{X}'$.)

\section{Conformities \label{conf}}

In this section we introduce a generalization of diversities analogous to
\emph{uniformities}, which generalize metric spaces. Uniformities lie between
metric spaces and topologies, in the sense that every metric space defines a
uniformity, and every uniformity defines a topology (which coincides with the
metric topology when the uniformity came from a metric).
Uniformities characterize uniform continuity, uniform convergence and Cauchy
sequences, which are not topological concepts.

The carry-over from the metric case is natural 
but nontrivial, since diversities can behave differently on sets of different
cardinality.
Since this construction is qualitatively different from metric uniformities,
it requires a different name. We asked ourselves ``what would you call a uniformity
that came from a diversity?'', and the answer was clear: a \emph{conformity}.

Throughout this section, we will give the analogous definitions and results
for uniformities, using the standard treatment from Kelley \cite{kelley1955}.
We begin by defining conformities and comparing them to uniformities; we show
that just like uniformities, conformities have a countable base if and only
if they are generated by some pseudodiversity.

We then briefly touch on the problem of completion for conformities.

Finally, we define power conformities: from a conformity defined on a set
$X$, we can construct a conformity on $\pfin{X}$ from which pseudodiversities
can be considered uniformly continuous functions. We show that every conformity
is generated by exactly the set of pseudodiversities which are uniformly
continuous from its power conformity to $\mathbb{R}$. This gives an equivalent
definition of conformity in terms of pseudodiversities.

\subsection{Conformities of diversities}

Recall that for $(X,d)$ a metric space, $\{x_n\}$ a sequence in $X$, that $\{x_n\}$
is Cauchy iff for each $\epsilon > 0$ there is some $N$ such that every pair of
points $(x_i,x_j)$ with $i>N,j>N$ has $d(x_i,x_j) < \epsilon$.

Similarly, let $f:X\to Y$ be a function between metric spaces $(X,d)$ and
$(Y,g)$. Then $f$ is uniformly continuous iff each $\epsilon > 0$ has a
$\delta > 0$ such that whenever pairs of points $(x,y)\in X\times X$ satisfy
$d(x,y) < \delta$, the pairs $(f(x), f(y))$ satisfy $g(f(x),f(y)) < \epsilon$.

A similar characterization of uniform convergence of sequences of functions can
be given in terms of pairs of points. From these observations arises the theory
of uniformities, which is described in any standard text on analysis (c.f.
\cite{bourbaki1989,kelley1955}). We briefly describe the theory here. For any set
$X$ define a \defn{uniformity} on $X$ as a filter $\mathcal{U}$ on $X\times X$
satisfying
\begin{enumerate}
\item[U1.] $(x, x) \in U$ for every $x \in X$, $U\in\mathcal{U}$.
\item[U2.] If $U\in\mathcal{U}$, $(x,y)\in U$, then $(y,x)\in U$.
\item[U3.] For every $U\in\mathcal{U}$, there exists some $V\in\mathcal{U}$ with
$V\circ V\subseteq U$, where in general we define
\[ U\circ V := \{ (x, z) : (x,y) \in U, (y,z) \in V \text{ for some } y \in X \} \]
\end{enumerate}

In particular, for any pseudometric space $(X,d)$ we can define the \defn{metric
uniformity} as the filter on $X\times X$ defined by
\[ U^\epsilon = \{ (x, y) : d(x,y) < \epsilon \} \]
for each $\epsilon > 0$. We see from this example that (U1) expresses the
requirement that $d(x,x) = 0$ for all $x\in X$, (U2) expresses symmetry,
and (U3) expresses the triangle inequality.

Uniform structure can be defined entirely with respect to uniformities. For
example, given sets $X,Y$ and uniformities $\mathcal{U},\mathcal{V}$ on $X$
and $Y$ respectively, we can call a function $f:X\to Y$ \defn{uniformly continuous}
if $f^{-1}(V) \in \mathcal{U}$ for every $V\in\mathcal{V}$. (Here $f$ acts
on members of $V$ componentwise.) A sequence $\{x_n\}\subset X$ is \defn{Cauchy}
if for every $U\in\mathcal{U}$, there is some $N$ such that pairs of elements
$(x_i,x_j)$ of $\{x_n\}$ are in $U$ whenever $i,j> N$. It is not hard to see
that for metric uniformities, these definitions coincide with the ordinary ones
for metric spaces.

To abstract the uniform structure of diversities, uniformities are clearly
insufficient. For one thing, since diversities map finite sets rather than
pairs, we should seek a filter on $\pfin{X}$ rather than $X\times X$. Then
symmetry is no longer required, but now monotonicity is. Finally, it is not
meaningful to compose finite sets as in (U3), so we will need a different
way to express an analogue of the triangle inequality.

Putting all this together, we define a \defn{conformity} $\mathcal{C}$ on $X$
as a filter on $\pfin{X}$ satisfying
\begin{enumerate}
\item[C1.] $\{x\}\in C$ for every $x \in X$, $C\in\mathcal{C}$.
\item[C2.] For every $C\in\mathcal{C}$, whenever $A\in C$ and $B\subseteq A$,
we have $B\in C$.
\item[C3.] For every $C\in\mathcal{C}$, there exists some $D\in\mathcal{C}$ with
$D\circ D\subseteq C$, where in general we define
\[ U\circ V := \{ u \cup v : u \in U, v \in V \text{ and } u \cap v \neq \varnothing \} \]
\end{enumerate}
Often the term \emph{conformity} is also used to refer to the pair $(X, \mathcal{C})$.

An observation that will be necessary later (one which also holds for uniformities)
is that for any $D\in\mathcal{C}$, $(D\circ D)\circ D = D\circ(D\circ D)$, so that
$D\circ D\circ D$ is defined unambiguously. To estimate the size of this, we also
note that $D\circ D \circ D \subseteq (D\circ D)\circ(D\circ D)$.

As in the metric case, there is a canonical way to generate a conformity from a
diversity; if $\delta$ is a pseudodiversity on $X$, we have the conformity
generated by the sets
\[ C^\epsilon = \{ A: \delta(A) \leq \epsilon \} = \delta^{-1}[0,\epsilon] \]
for each $\epsilon > 0$. (This is equivalent to one using strict inequalities,
but typographically nicer.)

As in the metric case, uniform structure can be defined on conformities in a
way that generalizes that of diversities: let $(X, \mathcal{C})$ and
$(Y, \mathcal{D})$ be conformities. Then a function $f$ is \defn{uniformly
continuous} from $X$ to $Y$ if for all $D\in\mathcal{D}$, the set
$\{ f^{-1}(d):d\in D \}$ is in $\mathcal{C}$. A sequence $\seq{x}$ on $X$ is
a \defn{Cauchy sequence} if for all $C\in\mathcal{C}$, $\pfin{\{x_n\}_{n\geq N}}
\subseteq C$ for some integer $N$. For conformities generated from diversities
in the above way, these definitions coincide with those given in the previous
section.

More generally, given a collection of pseudodiversities $\{\delta_\alpha\}_{\alpha\in\mathcal{A}}$,
we can generate a conformity from the sets
$\left\{\delta^{-1}_\alpha[0, \epsilon]\right\}_{\alpha\in\mathcal{A},\epsilon>0}$. We therefore seek a
characterization of conformities in terms of the diversities which generate
them. (In a later section, we will see that all conformities can be described
in this way, so that we can \emph{define} conformities in terms of such sets.)
We begin by stating a result from Kelley \cite{kelley1955} along with a summary
of his proof:
\begin{thrm} A uniformity is generated by a single pseudometric iff it has a
countable base.\end{thrm}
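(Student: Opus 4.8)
The plan is to prove both directions of the biconditional. The easy direction is that a uniformity generated by a single pseudometric $d$ has a countable base: the sets $U^{1/n}=\{(x,y):d(x,y)<1/n\}$ for $n\in\mathbb{N}$ form such a base, since any member of the metric uniformity contains some $U^\epsilon$, and $U^{1/n}\subseteq U^\epsilon$ as soon as $1/n<\epsilon$. So the substance lies in the converse: given a uniformity $\mathcal{U}$ with a countable base, I must manufacture a single pseudometric $d$ whose metric uniformity is exactly $\mathcal{U}$.

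First I would extract from the countable base a decreasing sequence $\{V_n\}$ of symmetric members of $\mathcal{U}$ satisfying the key self-refinement condition $V_{n+1}\circ V_{n+1}\circ V_{n+1}\subseteq V_n$ for every $n$, with $V_0=X\times X$. This is where axioms (U2) and (U3) are used repeatedly: starting from any countable base, I symmetrize each member (intersecting $U$ with its inverse, which stays in the filter), and then thin the sequence out, invoking (U3) at each stage to find a successor whose triple composition lands inside its predecessor. Monotone shrinking guarantees the resulting sequence is still a base. The triple-composition bound, rather than a mere $V_{n+1}\circ V_{n+1}\subseteq V_n$, is exactly what the standard metrization argument needs.

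The main obstacle, and the technical heart of the proof, is the construction of the pseudometric itself. The natural first attempt is to set $f(x,y)=\inf\{2^{-n}:(x,y)\in V_n\}$ (with $f=1$ if no $V_n$ contains the pair), but $f$ need not satisfy the triangle inequality. The standard fix, which I would follow, is to define $d(x,y)$ as the infimum over all finite chains $x=z_0,z_1,\ldots,z_k=y$ of the sum $\sum_i f(z_{i-1},z_i)$. This $d$ is automatically a pseudometric — symmetry and the triangle inequality are immediate from the chain definition, and $d(x,x)=0$ holds. The crucial and only delicate estimate is the two-sided comparison $\tfrac12 f(x,y)\le d(x,y)\le f(x,y)$; the upper bound is trivial (take the one-step chain), while the lower bound $f(x,y)\le 2\,d(x,y)$ is proved by induction on chain length using precisely the triple-composition property of the $\{V_n\}$.

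Finally I would verify that the metric uniformity of this $d$ coincides with $\mathcal{U}$. The comparison estimate translates set-membership in the $V_n$ into bounds on $d$: a pair with $d(x,y)$ small forces $f(x,y)$ small, hence the pair lies in some $V_n$, and conversely membership in $V_n$ bounds $d$ above. Concretely, $\{(x,y):d(x,y)<2^{-n}\}\subseteq V_n$ and $V_n\subseteq\{(x,y):d(x,y)\le 2^{-n+1}\}$, so the two filters have a common base and are therefore equal. I expect the entire argument to be Kelley's classical metrization theorem, and the only step requiring genuine care is the lower bound in the comparison estimate.
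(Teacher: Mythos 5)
Your proposal is correct and follows essentially the same route as the paper's own (summarized) proof: the easy direction via the sets $U^{1/n}$, and the converse via Kelley's classical argument --- thin the countable base to a symmetric sequence with the triple-composition property, define $f(x,y)=2^{-n}$, take the chain-infimum $d$, and establish the two-sided comparison $d\leq f\leq 2d$ to conclude the two filters coincide. No gaps beyond the standard technical details the paper itself also leaves to the reader.
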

The standard proof of this theorem goes as follows: it is obvious that any
uniformity generated by a pseudometric has a countable base. Conversely, if
there exists a countable base for a uniformity on $X$, there exists a countable
base $\{U_n\}_{n\in\mathbb{N}}$ for which the following argument holds. Define
the function $f(x,y) = 2^{-n}$, where $n=\sup\{i:(x,y)\in U_i\}$. This generates
the uniformity but does not satisfy the triangle inequality, so define
\[ d(x,y) = \inf \sum_{i=1}^{m-1} f(x_i, x_{i+1}) \]
where the infimum is taken over all sequences $\{x_i\}_{i=1}^m$ with $x_1=x$
and $x_m = y$. This clearly satisfies the triangle inequality, so it just
remains to be shown that $d$ generates the uniformity. This is done by
proving that $d(x,y) \leq f(x,y) \leq 2d(x,y)$, which follows from technical
constraints on $\{U_n\}$.

Given a conformity with a countable base $\{C_n\}$ on a set $X$, one might try
to translate this proof directly: define a function $f(A):\pfin{X}\to\mathbb{R}$
by $f(A) = \sup\{i:A\in C_i\}$, then somehow tweak $f$ to (a) satisfy the
triangle inequality and (b) generate the same conformity as $f$. However, it
appears that any direct analogue to the ``infimum over all paths'' strategy
used in the metric case (there are several) cannot satisfy both (a) and (b)
simultaneously.

Nonetheless, the result is true, which is the content of the next theorem.

\begin{lmma} Let $(X, \mathcal{C})$ have a countable base. Then it has a countable
base $\{ C_n \}$ satisfying $C_0 = \pfin{X}$, $C_i\circ C_i\circ C_i \subseteq C_{i-1}$
for $i>0$.\end{lmma}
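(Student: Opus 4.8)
The plan is to build the sequence $\{C_n\}$ by induction, shrinking a given countable base just enough at each stage to force the triple-composition nesting, while intersecting with that base to guarantee the result is still fine enough to generate $\mathcal{C}$. First I would fix a countable base $\{B_n\}_{n\geq 1}$ for $\mathcal{C}$ and set $C_0 = \pfin{X}$; this lies in $\mathcal{C}$ since it is the top element of the filter, and the required inclusion $C_1\circ C_1\circ C_1\subseteq C_0$ will hold automatically.

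For the inductive step, suppose $C_{i-1}\in\mathcal{C}$ has been chosen. I would apply axiom (C3) to $C_{i-1}$ to obtain $D\in\mathcal{C}$ with $D\circ D\subseteq C_{i-1}$, and then apply (C3) again to $D$ to obtain $E\in\mathcal{C}$ with $E\circ E\subseteq D$. Invoking the observation recorded above that $E\circ E\circ E\subseteq(E\circ E)\circ(E\circ E)$, together with the evident monotonicity of $\circ$ (if $U\subseteq U'$ and $V\subseteq V'$ then $U\circ V\subseteq U'\circ V'$), this yields
\[ E\circ E\circ E\subseteq (E\circ E)\circ(E\circ E)\subseteq D\circ D\subseteq C_{i-1}. \]
Now I would set $C_i = E\cap B_i$, which lies in $\mathcal{C}$ because filters are closed under finite intersection. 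Since $C_i\subseteq E$, monotonicity of $\circ$ gives $C_i\circ C_i\circ C_i\subseteq E\circ E\circ E\subseteq C_{i-1}$, which is exactly the required nesting, and by construction $C_i\subseteq B_i$.

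It then remains to verify that $\{C_n\}_{n\geq 0}$ is a base for $\mathcal{C}$. Each $C_n$ lies in $\mathcal{C}$ by construction, so it suffices to check that every element of $\mathcal{C}$ contains some $C_n$. Given $C\in\mathcal{C}$, because $\{B_n\}$ is a base there is some $k\geq 1$ with $B_k\subseteq C$; since $C_k\subseteq B_k$ by construction, we obtain $C_k\subseteq C$. Hence $\{C_n\}$ generates $\mathcal{C}$ and is the desired base.

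The only real obstacle is the tension flagged in the discussion preceding the lemma: iterating (C3) alone produces a nested sequence with the composition property but with no guarantee that it is fine enough to be a base, whereas the supplied base $\{B_n\}$ carries no composition structure at all. The resolution is to do both at once. The \emph{two} applications of (C3) supply the triple-composition nesting (this is precisely why a single application does not suffice and why the cited inclusion $E\circ E\circ E\subseteq(E\circ E)\circ(E\circ E)$ is needed), and intersecting with $B_i$ at each step, legitimate because $\mathcal{C}$ is a filter, forces $C_i\subseteq B_i$ and so keeps the family cofinal.
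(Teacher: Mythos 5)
Your proof is correct and takes essentially the same approach as the paper: both arguments apply (C3) twice, use the inclusion $D\circ D\circ D\subseteq(D\circ D)\circ(D\circ D)$ to get the triple-composition nesting, and intersect against the given countable base to keep the new family cofinal. Your write-up is somewhat more explicit than the paper's (which first forms a nested base $W_n=V_n\cap W_{n-1}$ and then extracts a subsequence), but the underlying mechanism is identical.
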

\begin{proof} Let $\{ V_n \}$ be a countable base for $\mathcal{C}$. Define
$W_0 = \pfin{X}$, $W_n = V_n \cap W_{n-1}$. Then $\{ W_n \}$ is a nested
countable base. Finally, choose $\{ C_n \}$ as $C_i = W_{n_i}$, where $n_i$
are chosen inductively as $n_0 = 0$, then $(W_{n_i} \circ W_{n_i}) \circ (W_{n_i}
\circ W_{n_i})\subseteq W_{n_{i-1}}$.\end{proof}

\begin{thrm} Let $(X, \mathcal{C})$ be a conformity. There exists a
pseudodiversity $\delta$ which generates $\mathcal{C}$ iff $\mathcal{C}$
has a countable base.\label{idliketobe}\end{thrm}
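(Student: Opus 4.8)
The plan is to prove both implications, the forward one being routine and the converse carrying essentially all of the work. If $\mathcal{C}$ is generated by a pseudodiversity $\delta$, then the sets $\delta^{-1}[0,1/n]$ for $n\in\mathbb{N}$ are a countable base, since each generating set $\delta^{-1}[0,\epsilon]$ contains $\delta^{-1}[0,1/n]$ whenever $1/n\leq\epsilon$. So I concentrate on the converse: from a countable base I must manufacture a pseudodiversity that generates $\mathcal{C}$.

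First I would invoke the preceding Lemma to replace the given base with one $\{C_n\}$ satisfying $C_0=\pfin{X}$ and $C_i\circ C_i\circ C_i\subseteq C_{i-1}$, and on it define $f:\pfin{X}\to\mathbb{R}$ by $f(A)=2^{-\sup\{n:A\in C_n\}}$ (so $f(A)\leq 1$ always, and $f(A)=0$ on singletons by C1). As in the metric case this $f$ generates $\mathcal{C}$ but fails the triangle inequality, so I would pass to
\[ \delta(A)=\inf\Big\{\,\textstyle\sum_{j=1}^m f(A_j)\ :\ \bigcup_j A_j\supseteq A,\ \{A_j\}\text{ has connected intersection graph}\,\Big\}, \]
the least total $f$-weight over all covers of $A$ by finite sets whose intersection graph is connected. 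Nonnegativity of $\delta$ is clear, $\delta$ vanishes on singletons via the trivial cover $\{A\}$, and monotonicity is immediate because a connected cover of a larger set is also one of any subset. The connected-cover formulation is what makes D2 clean: given near-optimal connected covers of $A\cup C$ and of $C\cup B$ and a point $c\in C\neq\varnothing$, some member of each cover contains $c$, so the union of the two families is again connected (the families are linked through $c$) and covers $A\cup B\cup C\supseteq A\cup B$, whence $\delta(A\cup B)\leq\delta(A\cup C)+\delta(C\cup B)$.

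It then remains to show $\delta$ generates $\mathcal{C}$, i.e. that the sublevel sets $C^\epsilon=\delta^{-1}[0,\epsilon]$ and the $C_n$ are mutually cofinal. One direction is easy: the trivial cover gives $\delta\leq f$, hence $C_n\subseteq C^{2^{-n}}$, so the $\delta$-conformity is contained in $\mathcal{C}$. The reverse inclusion is the crux and rests on a \emph{chaining lemma}, namely that a connected cover of small total weight has its union deep in the filter: concretely, $\sum_j f(A_j)\leq 2^{-n}$ should force $\bigcup_j A_j\in C_{n-c}$ for a fixed constant $c$. Subset-closure C2 would then upgrade this to $A\in C_{n-c}$, yielding $C^\epsilon\subseteq C_n$ for suitable $\epsilon$ and so the missing inclusion $\mathcal{C}\subseteq$ the $\delta$-conformity.

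This chaining lemma is the main obstacle, and it is precisely where the diversity setting departs from Weil's. In the metric proof a path carries a linear order, so one splits it where the cumulative weight first exceeds half and recombines the two halves and the middle edge through a threefold composition. A connected cover has no such order; I would instead argue by induction on the number of sets, pass to a spanning tree of the intersection graph, and cut at a centroid vertex, whose removal leaves components each of at most half the total weight. The centroid set itself is the common connector through which the two reassembled halves overlap, and the threefold composition $C_i\circ C_i\circ C_i\subseteq C_{i-1}$ furnished by the Lemma is exactly what glues left half, connector, and right half while costing only one level. The delicate point---and the reason the naive ``infimum over paths'' fails to satisfy generation and the triangle inequality at once---is that a centroid can have many light branches; combining them one at a time would forfeit a level per branch, so the branches must be merged in increasing order of weight, with a Kraft-type estimate guaranteeing that the total level-loss stays bounded by a constant independent of the cover. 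Making this weighted merge precise is the real content of the argument.
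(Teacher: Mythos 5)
Your forward direction and your construction of $f$ (the paper's $\delta'$) match the paper, and your connected-cover definition of $\delta$ does give D1, monotonicity, and D2 cleanly --- linking the two near-optimal covers through a common point $c\in C$ is a nice way to get the triangle inequality. The easy containment $C_n\subseteq\delta^{-1}[0,2^{-n}]$ is also fine. But the proof is not complete: the entire difficulty of the theorem sits in your ``chaining lemma'' (that a connected cover with $\sum_j f(A_j)\leq 2^{-n}$ has union in $C_{n-c}$ for a universal constant $c$), and you do not prove it --- you end by saying that making the weighted centroid merge precise ``is the real content of the argument.'' That is exactly right, and it is the part that is missing. The obstacle you identify is real: a centroid of the spanning tree can have unboundedly many branches, and composing them one at a time via $C_i\circ C_i\circ C_i\subseteq C_{i-1}$ loses a level per composition, so without a genuinely worked-out Kraft-type bookkeeping (merging light branches first and showing the accumulated level loss is absorbed by their depth in the filter) there is no universal constant $c$, and the sublevel sets of $\delta$ need not lie in $\mathcal{C}$. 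As it stands the reverse containment $\mathcal{C}\subseteq\{\delta^{-1}[0,\epsilon]\}$ is asserted, not established.

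The paper avoids this obstacle rather than solving it: it keeps the covers \emph{linearly ordered}. It defines $\bar{\delta}$ as an infimum over \emph{chains} (sequences $A_1,\dots,A_n$ with consecutive nonempty intersections) and $\delta$ as an infimum over \emph{cycles} (chains with $A_1\cap A_n\neq\varnothing$), and shows $\delta\leq\bar{\delta}\leq2\bar{\delta}\cdot 1\leq 2\delta$, so the two are interchangeable up to a factor of $2$. Cycles concatenate, which gives D2; chains carry a linear order, so one can split at the index where the partial sums of $\delta'$ first exceed half the total, apply induction to the two halves, and glue left half, middle set, and right half with a single application of $C_{N+1}\circ C_{N+1}\circ C_{N+1}\subseteq C_N$ --- a three-way composition with no dependence on the number of sets. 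That yields $\bar{\delta}\leq\delta'\leq2\bar{\delta}$ and hence generation. If you want to salvage your version, the shortest route is to note that your connected-cover $\delta$ is sandwiched between the paper's chain and cycle functionals up to constants and reduce to their argument; otherwise you must actually carry out the centroid/Kraft estimate, which is a substantial piece of work you have only gestured at.
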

\begin{proof} If $\delta$ exists the sets $\{ C^{1/n} \}_{n\in\mathbb{N}}$
are our base.

Conversely, let $\{ C_n \}_1^\infty$ be a base for $\mathcal{C}$ satisfying
$C_0 = \pfin{X}$ and $C_i\circ C_i\circ C_i \subseteq C_{i-1}$ for $i > 0$.
Define $\delta'$ on $\pfin{X}$ by
\[ \delta'(A) = \left\{\begin{array}{lr}
  0      & A \in C_n \text{ for all } n\\
  2^{-k} & A \in C_n \text{ for } 0 \leq n \leq k \text{, but } A \notin C_{k+1}
\end{array}\right. \]
Notice that for $k \geq 0$,
\begin{equation} \delta'^{-1}([0, 2^{-k}]) = C_k \label{grasswasgreener} \end{equation}
and that $\delta'$ is monotonic: by (C2), if $A\subseteq B$, then $A\in C_n$
whenever $B$ is.

Define a \defn{chain} as a sequence $\{ A_i \}_{i=1}^n$ in $\pfin{X}$ with
$A_i \cap A_{i-1}\neq\varnothing$ for $i = 2,\ldots,n$. Define a \defn{cycle}
as a chain with $A_1 \cap A_n\neq\varnothing$. Write
\[ \bar{\delta}(A) = \inf_{\substack{\text{chains covering $A$}}} ~ \sum_{i=1}^n \delta'(A_i) \]
\[ \delta(A) = \inf_{\substack{\text{cycles covering $A$}}} ~ \sum_{i=1}^n \delta'(A_i) \]
Notice $\delta(\varnothing) = \bar{\delta}(\varnothing) = 0$.

We claim that $\delta$ is our desired pseudodiversity, since the sets
$(\delta')^{-1}[0, \epsilon]$ generate the conformity, and $\delta\leq\delta'\leq4\delta$.
We prove this in three stages.

\begin{enumerate}
\item [S1.] First of all, $\delta$ is a pseudodiversity. By (C1), for every
$x\in X$, $n\in\mathcal{N}$, $\{x\}\in C_n$ so that $\delta'(\{x\}) = 0$.
Also $\{x\}$ is a cycle covering itself, so $\delta(\{x\}) = 0$.

The triangle equality also holds: let $\epsilon > 0$, $A,C\in\pfin{X}$
and $B\in\pfin{X}$ be nonempty. Choose cycles $\{ A_i \}_1^n$ and $\{ B_i \}_1^m$
covering $A \cup B$ and $B \cup C$, respectively, and for which
\[ \sum_{i=1}^n \delta'(A_i) \leq \delta(A \cup B) + \epsilon \qquad\text{ and } \qquad
   \sum_{i=1}^m \delta'(B_i) \leq \delta(B \cup C) + \epsilon \]
Then $\{ A_i \}_1^n \cup \{ B_i \}_1^m$ forms a cycle (after reordering)
covering $A \cup C$, so
\[ \delta(A \cup C) \leq \sum_{i=1}^n \delta'(A_i) + \sum_{i=1}^m \delta'(B_i) \leq \delta(A \cup B) + \delta(B\cup C) + 2\epsilon \]

\item [S2.] Next, we notice that
\begin{itemize}
\item Every cycle is a chain, so $\delta \leq \bar{\delta}$.
\item If $\{ A_1, \ldots, A_{n-1}, A_n \}$ is a chain, then $\{ A_1,
\ldots, A_{n-1}, A_n, A_{n-1}, \ldots, A_1 \}$ is a cycle --- and the sum
of $\delta'$ over this cycle is less than twice the sum of $\delta'$ over
the original chain. We conclude that
\end{itemize}
\begin{equation}\label{welcome} \delta \leq \bar{\delta} \leq 2\delta \end{equation}

\item [S3.] Finally, we claim that $\bar{\delta}\leq\delta'\leq2\bar{\delta}$.
This combined with \eqref{welcome} will give the main result.

Trivially, $\bar{\delta} \leq \delta'$. For the other inequality, choose
$A \in \pfin{X}$. Our strategy is to induct on the greatest integer $N$
such that $\bar{\delta}(A) < 2^{-N}$.

The case $N = 0$ is easy, because then $\delta' \leq 1 \leq 2\bar{\delta}$.
(This also covers the case $\bar{\delta}(A) = 1$, which is not covered by
the induction.) When $N > 0$, we can choose positive $\epsilon$ less than
$\left(2^{-N} - \bar{\delta}(A)\right)$, and a chain $\{ A_i \}_1^n$ with
\begin{equation} \label{lucy}
 \sum_{i=1}^n \delta'(A_i) < \bar{\delta}(A) + \epsilon < 2^{-N} 
\end{equation}

If $n = 1$, we have $\delta'(A) \leq \delta'(A_1) < 2^{-N} < 2\bar{\delta}(A)$.
Otherwise, there is $k < n$ such that
\begin{equation}
  \label{howiwishyouwerehere}
   \sum_{i=1}^{k-1} \delta'(A_i) \leq \frac{\bar{\delta}(A)}{2} \text{ and }
   \sum_{i=k+1}^n \delta'(A_i) \leq \frac{\bar{\delta}(A)}{2}
\end{equation}
Since $\{A_i\}_1^{k-1}$ and $\{A_i\}_{k+1}^n$ are chains whose sum
under $\delta'$ is less than half that of $\{A_i\}_1^n$, the inductive
hypothesis applies to them and we may write
\begin{align*}
\delta'(A_1 \cup \cdots \cup A_{k-1})
  &\leq 2\bar{\delta}(A_1 \cup \cdots \cup A_{k-1})     &\text{inductive hypothesis}    \\
  &\leq 2\sum_{i=1}^{k-1}  \delta'(A_i)                 &\text{definition of }\bar{\delta}      \\
  &\leq \bar{\delta}(A)                                 &\text{by }\eqref{howiwishyouwerehere}  \\
  &< 2^{-N}
\end{align*}
Similarly $\delta'(A_{k+1} \cup \cdots \cup A_n) < 2^{-N}$, and
$\delta'(A_k) < 2^{-N}$ by \eqref{lucy}. So
\[ (A_1 \cup \cdots \cup A_{k-1}) \in C_{N+1} \text{ and } A_k \in C_{N+1} \text{ and } (A_{k+1}\cup\cdots\cup A_n) \in C_{N+1} \]
Our double-composition hypothesis gives
\[ (A_1 \cup \cdots \cup A_{k-1}) \cup A_k \cup (A_{k+1}\cup\cdots\cup A_n) \in C_N \]
And by monotonicity of $\delta'$,
\[ \delta'(A) \leq \delta'(A_1 \cup \cdots \cup A_n) \leq 2^{-N} \leq 2\bar{\delta}(A) \]
\end{enumerate}
\end{proof}

This characterizes the conformities generated by single pseudodiversities.
Later we will describe \emph{every} conformity in terms of the pseudodiversities
that generate them.

\subsection{Induced uniformities and completeness}

Given a conformity $\mathcal{C}$, we define its \defn{induced uniformity}
as the uniformity generated by the sets
\[ U_C = \{ (x,y) : \{ x,y \} \in C \} \]
for every $C\in\mathcal{C}$. It is straightforward to show that this is a
uniformity; since every singleton $\{x\}$ is in every $C\in\mathcal{C}$, we
have every pair $(x,x)$ in every generator of the induced uniformity, proving
(U1). Since $\{x,y\}=\{y,x\}$ we have (U2). Finally, (U3) follows from the
observation that whenever $\{x,y\}\in C\in\mathcal{C}$ and $\{y,z\}\in D\in\mathcal{C}$,
the set $D\circ D\in\mathcal{C}$ contains $\{x,y,z\}$. Then $\{x,z\}\in D\circ C$
by (C2). In other words, if $C\circ D \subseteq E$ in the conformity, then
$U_C\circ U_D\subseteq U_E$ in the induced uniformity. Thus (U3) is implied
by (C3).

\begin{thrm} Let $X$ be a set, $\{\delta_A\}_{A\in\mathcal{A}}$ a family of
diversities which generate a conformity $\mathcal{C}$. For each $\delta_A$,
write $d_A$ for its induced metric. Then the uniformity generated by the
metrics $\{d_A\}_{A\in\mathcal{A}}$ is exactly the induced uniformity of
$\mathcal{C}$.\label{sunking}\end{thrm}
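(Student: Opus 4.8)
The plan is to prove equality of the two uniformities directly, as filters on $X\times X$, by comparing filter bases. Write $C_A^{\epsilon}=\delta_A^{-1}[0,\epsilon]$ for the canonical generators of $\mathcal{C}$, so that a base for $\mathcal{C}$ consists of finite intersections $\bigcap_{i=1}^n C_{A_i}^{\epsilon_i}$, and every member of $\mathcal{C}$ contains such an intersection. Recall that the uniformity generated by the metrics $\{d_A\}$ has as a base the finite intersections of the sets $\{(x,y):d_A(x,y)<\epsilon\}$, while the induced uniformity of $\mathcal{C}$ is generated by the sets $U_C=\{(x,y):\{x,y\}\in C\}$ for $C\in\mathcal{C}$. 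The first thing I would record is the elementary identity
\[ U_{C_A^{\epsilon}}=\{(x,y):\delta_A(\{x,y\})\leq\epsilon\}=\{(x,y):d_A(x,y)\leq\epsilon\}, \]
immediate from the definition of the induced metric $d_A(x,y)=\delta_A(\{x,y\})$. This shows the generators of the two uniformities already agree up to the cosmetic difference between $\leq$ and $<$; the remaining work is to organize the passage from generators to full filters.

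The key structural observation is that $C\mapsto U_C$ is monotone and preserves finite intersections, $U_{C\cap C'}=U_C\cap U_{C'}$, since $\{x,y\}\in C\cap C'$ exactly when $\{x,y\}$ lies in both. As $\mathcal{C}$ is closed under finite intersection, the family $\{U_C:C\in\mathcal{C}\}$ is therefore closed under finite intersection, and since each $U_C$ contains the diagonal it is a genuine filter base for the induced uniformity. I would then prove the two inclusions. Given $C\in\mathcal{C}$, the fact that the sets $C_A^{\epsilon}$ generate $\mathcal{C}$ yields a finite intersection $D=\bigcap_{i=1}^n C_{A_i}^{\epsilon_i}\subseteq C$, so that
\[ U_C\supseteq U_D=\bigcap_{i=1}^n\{(x,y):d_{A_i}(x,y)\leq\epsilon_i\}\supseteq\bigcap_{i=1}^n\{(x,y):d_{A_i}(x,y)<\epsilon_i\}; \]
thus $U_C$ contains a basic set of the metric uniformity and hence belongs to it. Conversely, a basic metric set $\bigcap_{i=1}^n\{(x,y):d_{A_i}(x,y)<\epsilon_i\}$ contains $U_C$ for $C=\bigcap_{i=1}^n C_{A_i}^{\epsilon_i/2}\in\mathcal{C}$, because $d_{A_i}(x,y)\leq\epsilon_i/2$ forces $d_{A_i}(x,y)<\epsilon_i$. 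Together these inclusions force the two filters to coincide.

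The step I expect to require the most care is controlling $U_C$ for an \emph{arbitrary} $C\in\mathcal{C}$ rather than only for the canonical generators: a priori the induced uniformity sees every member of the conformity, and one must verify these larger sets do not enlarge the uniformity beyond what the $d_A$ produce. The intersection-preservation of $C\mapsto U_C$ together with the generating property of the $C_A^{\epsilon}$ is exactly what tames this, reducing every $U_C$ to a finite intersection of the $U_{C_A^{\epsilon}}$. It is worth noting that the higher-cardinality behavior of diversities, which complicates other results, is inert here: the operator $U_C$ only inspects the two-element members of $C$, so the whole statement is essentially the assertion that the two-point shadow of $\mathcal{C}$ is the uniformity of the induced metrics. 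The only other point to state explicitly is the standard comparison, already used for metric uniformities, that replacing the closed conditions $d_A\leq\epsilon$ by the open conditions $d_A<\epsilon$ (and halving $\epsilon$ where needed) does not change the generated uniformity.
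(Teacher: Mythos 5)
Your proof is correct and follows essentially the same route as the paper's: both identify the two-point shadow $U_{C_A^\epsilon}$ of the canonical generators of $\mathcal{C}$ with the canonical generators $\{(x,y):d_A(x,y)<\epsilon\}$ of the metric uniformity and conclude the filters agree. You are somewhat more careful than the paper in passing from generators to the full filters (tracking finite intersections and the $\leq$ versus $<$ issue), but this is elaboration of the same argument, not a different one.
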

\begin{proof} Denote by $\mathcal{U}_d$ the uniformity generated by
$\{d_A\}_{A\in\mathcal{A}}$, and by $\mathcal{U}_c$ the uniformity induced by
$\mathcal{C}$. A base for $\mathcal{C}$ is
\[ C_{\epsilon,A} = \{ F : \delta_A(F) < \epsilon \} \]
where $\epsilon$ ranges over $\mathbb{R}^+$ and $A$ ranges over $\mathcal{A}$.
Then a base for $\mathcal{C}$ is
\[ U_{C_{\epsilon,A}} = \{ (x,y) : \delta_A(\{x,y\}) < \epsilon \} = \{ (x,y) : d_A(x,y) < \epsilon \} \]
For the same $\epsilon,A$. But this is just the canonical base for $\mathcal{U}_d$!
\end{proof}

\begin{corl} Let $(X,\mathcal{C})$ be a conformity. Then $\mathcal{C}$ has
a countable base iff its induced uniformity does.\label{toobigtoobig}\end{corl}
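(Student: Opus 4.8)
The plan is to prove the two implications separately, leaning on the correspondence $C \mapsto U_C$ together with the machinery of Theorems \ref{idliketobe} and \ref{sunking} and the cited pseudometrizability theorem for uniformities.

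For the forward direction I would argue directly. First observe that $\{U_C : C \in \mathcal{C}\}$ is already a base for the induced uniformity: it generates $\mathcal{U}_c$ by definition, and it is closed under finite intersection since $U_{C\cap C'} = U_C \cap U_{C'}$ and $\mathcal{C}$ is a filter. The map $C \mapsto U_C$ is monotone, so if $\{C_n\}$ is a countable base for $\mathcal{C}$, then given any $C \in \mathcal{C}$ we may pick $C_n \subseteq C$ and conclude $U_{C_n} \subseteq U_C$. Hence $\{U_{C_n}\}$ is cofinal in $\{U_C\}$ and is itself a countable base for $\mathcal{U}_c$. Equivalently, this direction also follows from the chain: a countable base gives, by Theorem \ref{idliketobe}, a single generating pseudodiversity $\delta$; by Theorem \ref{sunking} the induced uniformity is then generated by the single induced metric $d$; and a pseudometric uniformity has a countable base.

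For the reverse direction I would start by extracting a convenient base. Since $\{U_C\}$ is a base for $\mathcal{U}_c$, any countable base of $\mathcal{U}_c$ refines to one of the form $\{U_{C_n}\}$ with each $C_n \in \mathcal{C}$; replacing the $C_n$ by nested C3-refinements (possible since shrinking a $C_n$ only shrinks $U_{C_n}$, which preserves the base property) I may further assume the $C_n$ are decreasing with $C_n \circ C_n \circ C_n \subseteq C_{n-1}$. The goal is then to produce a countable base for $\mathcal{C}$ itself; by Theorem \ref{idliketobe} it suffices to manufacture a single pseudodiversity generating $\mathcal{C}$, for instance by feeding such a nested family into the chain/cycle construction used in the proof of that theorem.

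The hard part, and the genuine obstacle, is that the $C_n$ record only pairwise information. If $A \in C_{n+1}$ then by (C2) every pair $\{a,b\}\subseteq A$ lies in $C_n$, hence in any refinement $D$ with $U_{C_n}\subseteq U_D$; stitching these overlapping pairs along a spanning path gives $A \in D^{\circ(|A|-1)}$. But the composition budget supplied by C3 is bounded (only $D\circ D\circ D \subseteq C$), while $|A|$ is unbounded, so pair-level control does not transfer to sets of large cardinality. This is exactly the phenomenon of Theorem \ref{steiner-nonsense}, where a set of arbitrarily small diameter (all pairs close) can have arbitrarily large diversity. Consequently the naive family $\{C_n\}$ extracted from the uniformity is not a base for $\mathcal{C}$, and the crux is to recover the set-level structure without reconstructing it from pairs — using the intrinsic countable structure of $\mathcal{C}$ that becomes available once its induced uniformity is pseudometrizable, rather than Theorem \ref{sunking} alone, which runs only from diversities to uniformities.
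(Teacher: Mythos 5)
Your forward direction is fine: both the direct cofinality argument (using $U_{C\cap C'}=U_C\cap U_{C'}$ and monotonicity of $C\mapsto U_C$) and the alternative chain through Theorem \ref{idliketobe}, Theorem \ref{sunking} and the classical metrization theorem are correct, and the latter is exactly one half of the paper's own argument. The problem is the reverse direction, which in your proposal is not a proof at all. You correctly diagnose the obstruction --- a countable base $\{U_{C_n}\}$ of the induced uniformity controls only pairs, and chaining pairs of an $n$-element set $A\in C_{n}$ consumes $|A|-1$ compositions while (C3) supplies only a bounded number, so the extracted family $\{C_n\}$ need not be a base (nor even cofinal) in $\mathcal{C}$; this is indeed the same cardinality phenomenon as Theorem \ref{steiner-nonsense}. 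But having identified the obstacle, you stop: ``the crux is to recover the set-level structure \ldots using the intrinsic countable structure of $\mathcal{C}$'' is a statement of what remains to be done, not an argument. Feeding the pair-level family $\{C_n\}$ into the chain/cycle construction of Theorem \ref{idliketobe}, as you suggest, only produces a pseudodiversity generating the subconformity that $\{C_n\}$ generates, which may be strictly coarser than $\mathcal{C}$.

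For comparison, the paper never attempts to transfer a base across $C\mapsto U_C$. It chains three equivalences: $\mathcal{C}$ has a countable base iff it is generated by a single pseudodiversity (Theorem \ref{idliketobe}); this holds iff the induced uniformity is generated by a single pseudometric (invoking Theorem \ref{sunking}); and a uniformity is generated by a single pseudometric iff it has a countable base (the standard result from Bourbaki/Kelley). Your instinct that the reverse direction is the delicate one is sound --- indeed, Theorem \ref{sunking} as stated runs only from a generating family of pseudodiversities to the induced uniformity, so even the paper's ``only if'' in the middle step is left terse --- but your proposal does not close the gap, and so does not establish the corollary.
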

\begin{proof} By Theorem \ref{idliketobe} $\mathcal{C}$ has a countable
base iff it is generated by a single pseudodiversity; by Theorem \ref{sunking}
this occurs iff the induced uniformity is generated by a single pseudometric.
A standard result \cite{bourbaki1989,kelley1955} gives that uniformities
with countable bases are exactly those generated by single pseudometrics.
\end{proof}


Next, we give some standard definitions. For a uniform space $(X,\mathcal{U})$, the
\defn{uniform topology} of $\mathcal{U}$ on $X$ is the smallest topology containing
the sets
\[ N(x,U) = \{ y : (x,y)\in U \} \]
for all $x\in X$, $U\in\mathcal{U}$. Notice that if $\mathcal{U}$ is generated by
a pseudometric, this coincides with the pseudometric topology.

With the same space $(X,\mathcal{U})$, we call a filter $\mathcal{F}$ on
$X$ \defn{Cauchy} if for every $U\in\mathcal{U}$, there is some $F\in\mathcal{F}$
with $F\times F\subseteq U$. We say that $\mathcal{F}$ \defn{converges} to some
$x\in X$ if every neighborhood of $x$ (in the uniform topology) is in $\mathcal{F}$. We
then call a uniformity \defn{complete} if every Cauchy filter converges. It can
be shown that a metric space is complete iff its generated uniformity is, and
that every uniformity can be embedded minimally (i.e., satisfying a universal
property with respect to uniformly continuous maps) in a complete uniformity
\cite{bourbaki1989,kelley1955}.

The analogous definitions for conformities are as follows.

Let $F$ be a filter on $X$. If for all $C\in\mathcal{C}$, there exists $f\in F$
with $\pfin{f}\subseteq C$, then $F$ is a \defn{Cauchy filter}. If $x\in X$ and for
all $C\in\mathcal{C}$ there exist $f\in F$ with $\pfin{f}\subseteq \{ A : A \cup
\{ x \} \in C \}$, then $F$ \defn{converges to} $x$. Finally, if every
Cauchy filter converges to some point in $X$, we say $\mathcal{C}$ is
\defn{complete}.

\begin{thrm} A pseudodiversity $(X, \delta)$ is complete iff its conformity
$\mathcal{C}$ is.\end{thrm}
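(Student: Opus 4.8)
The plan is to prove the two implications separately, in each case translating between a sequence and a filter and invoking the relevant definitions of convergence on both sides. The structural fact I would exploit throughout is that the conformity $\mathcal{C}$ generated by $\delta$ has the countable base $\{C^{1/n}\}_{n\in\mathbb{N}}$, where $C^\epsilon=\delta^{-1}[0,\epsilon]$. This lets me reduce every condition quantified over an arbitrary $C\in\mathcal{C}$ to a check against the sets $C^\epsilon$, and — crucially for the hard direction — it lets me extract an honest sequence from a Cauchy filter.

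First I would show that completeness of $\mathcal{C}$ implies completeness of $(X,\delta)$. Given a Cauchy sequence $\{x_n\}$ in $(X,\delta)$, I form its tail filter $F$ generated by the sets $T_N=\{x_n:n\geq N\}$; these are nonempty and closed under finite intersection, so $F$ is a genuine filter. The Cauchy condition on $\{x_n\}$ says exactly that for each $\epsilon>0$ there is an $N$ with $\delta(A)\leq\epsilon$ for every finite $A\subseteq T_N$, i.e. $\pfin{T_N}\subseteq C^\epsilon$, so $F$ is a Cauchy filter. By completeness of $\mathcal{C}$ it converges to some $x$, and unwinding the definition of filter convergence against $C^\epsilon$ yields, for each $\epsilon>0$, an $N$ with $\delta(\{x\}\cup A)\leq\epsilon$ for all finite $A\subseteq T_N$ — which is precisely $x_n\to x$ in $(X,\delta)$.

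The reverse implication is the substantive direction. Assuming $(X,\delta)$ is complete, let $F$ be a Cauchy filter. Using the countable base, for each $n$ I choose $f_n\in F$ with $\pfin{f_n}\subseteq C^{1/n}$, and replace $f_n$ by $f_1\cap\cdots\cap f_n$ so that the $f_n$ are decreasing (still in $F$, and still inside $C^{1/n}$ by monotonicity of $\delta$). Since $\varnothing\notin F$ each $f_n$ is nonempty, so I may pick $x_n\in f_n$. Because the $f_n$ are nested, any finite collection of indices $\geq n$ yields points lying in $f_n$, whence $\delta$ of that finite set is $\leq 1/n$; thus $\{x_n\}$ is Cauchy in $(X,\delta)$ and, by completeness, converges to some $x$.

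It then remains to show that $F$ itself converges to $x$, and this is the main obstacle: I must pass from control of $\delta(\{x\}\cup\{x_{i_1},\ldots\})$ (convergence of the sequence) and control of $\delta$ on subsets of $f_n$ (the Cauchy-filter data) to control of $\delta(\{x\}\cup A)$ for an \emph{arbitrary} finite $A\subseteq f_n$. The bridge is axiom (D2). Fixing $\epsilon$, I choose $n$ with $1/n\leq\epsilon/2$ and an index $m\geq n$ large enough that $\delta(\{x_m,x\})\leq\epsilon/2$; then for any finite $A\subseteq f_n$, since $x_m\in f_m\subseteq f_n$ we have $A\cup\{x_m\}\subseteq f_n$ and so
\[ \delta(\{x\}\cup A)\leq\delta(A\cup\{x_m\})+\delta(\{x_m\}\cup\{x\})\leq \tfrac{1}{n}+\tfrac{\epsilon}{2}\leq\epsilon. \]
This says $\pfin{f_n}\subseteq\{A:A\cup\{x\}\in C^\epsilon\}$, and since the sets $C^\epsilon$ form a base for $\mathcal{C}$, it establishes that $F$ converges to $x$, completing the proof.
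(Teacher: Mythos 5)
Your proof is correct and follows essentially the same route as the paper's: forming the tail filter of a Cauchy sequence for one direction, and for the other, extracting a Cauchy sequence from nested filter sets $f_1\cap\cdots\cap f_n$ and then using (D2) through a point $x_m$ of that sequence to show the filter converges to the sequence's limit. Your write-up just spells out a few steps (e.g.\ unwinding filter convergence against the base $\{C^{1/n}\}$) that the paper leaves implicit.
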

\begin{proof} Suppose $(X, \delta)$ is complete and let $F$ be a Cauchy filter
on $X$. Then for every $\epsilon > 0$ there is some $f^\epsilon\in F$ so that
$\pfin{f^\epsilon}\subseteq \{A : \delta(A) < \epsilon\}$.
Take some sequence $\epsilon_n\to 0$, and define the sets $g^n\subseteq
X$ by $g^1=f^{\epsilon_1}$, $g^n=f^{\epsilon_n}\cap g^{\epsilon_{n-1}}$
for $n>1$.

Choose $x^n\in g^n$ for each $n$ to form a Cauchy sequence $\seq{x}$,
with some limit $x$. For any $\epsilon > 0$, find an integer $N$ so
that $\epsilon_n < \epsilon$ and $\delta(\{x_n, x\}) < \epsilon$ for
all $n \geq N$. Then if $a \in \pfin{f^{\epsilon_n}}$, so is $a\cup \{x_n\}$,
so that $\delta(a \cup \{x\}) \leq \delta(a \cup \{x_n\}) + \delta(\{x_n,x\})
< 2\epsilon$. We conclude that $F$ converges to $x$.

Conversely, suppose that every Cauchy filter converges in $\mathcal{C}$,
and let $\seq{x}$ be a Cauchy sequence in $(X,\delta)$. Choose the sets
$F_N = \seq{x}_N^\infty$. These sets generate a Cauchy filter with some
limit $x$. It is clear that $x_n \to x$.
\end{proof}

For any conformity $(X,\mathcal{C})$ generated by a diversity, the conformity
is complete iff its diversity is iff the diversity's uniformity is
\cite{bourbaki1989,kelley1955}; thus completeness of the conformity is equivalent
to completeness of its induced uniformity. In fact, this is true in general,
as the next theorem shows.

\begin{thrm} Let $(X,\mathcal{C})$ be a conformity with complete induced uniformity
$\mathcal{U}$. Then $\mathcal{C}$ is complete.\label{sdsdsd}\end{thrm}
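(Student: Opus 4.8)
The plan is to take an arbitrary Cauchy filter $F$ for $\mathcal{C}$, manufacture a candidate limit point $x$ using the completeness of the induced uniformity $\mathcal{U}$, and then verify that $F$ in fact converges to $x$ in the conformity sense.

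First I would observe that every $\mathcal{C}$-Cauchy filter is automatically $\mathcal{U}$-Cauchy. The sets $U_C=\{(x,y):\{x,y\}\in C\}$ generate $\mathcal{U}$, and since $U_{C\cap C'}=U_C\cap U_{C'}$ and $\mathcal{C}$ is closed under intersection, the family $\{U_C\}_{C\in\mathcal{C}}$ is in fact a base for $\mathcal{U}$. So given $U\in\mathcal{U}$ I choose $C$ with $U_C\subseteq U$; the Cauchy hypothesis yields $f\in F$ with $\pfin{f}\subseteq C$, whence every pair drawn from $f$ lies in $U_C$, i.e. $f\times f\subseteq U_C\subseteq U$. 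Thus $F$ is $\mathcal{U}$-Cauchy, and since $\mathcal{U}$ is complete it converges to some $x\in X$; in particular $N(x,U)\in F$ for every $U\in\mathcal{U}$, as each such set is a uniform neighborhood of $x$.

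The heart of the argument is to promote this $x$ to a limit of $F$ for $\mathcal{C}$, i.e. to show that for each $C\in\mathcal{C}$ there is $f\in F$ with $A\cup\{x\}\in C$ for every finite $A\subseteq f$. Fix $C$ and use (C3) to choose $D\in\mathcal{C}$ with $D\circ D\subseteq C$. The Cauchy property gives $f\in F$ with $\pfin{f}\subseteq D$, while $\mathcal{U}$-convergence gives $N(x,U_D)\in F$; since $F$ is a filter closed under intersection and omits $\varnothing$, the set $f\cap N(x,U_D)$ lies in $F$ and so is nonempty, and I fix a single $y$ in it, for which $y\in f$ and $\{x,y\}\in D$. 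Now for any finite $A\subseteq f$ we have $A\cup\{y\}\in\pfin{f}\subseteq D$ and $\{x,y\}\in D$; these sets meet in $y$, so $A\cup\{x,y\}=(A\cup\{y\})\cup\{x,y\}\in D\circ D\subseteq C$, and monotonicity (C2) then forces $A\cup\{x\}\in C$. Hence $\pfin{f}\subseteq\{A:A\cup\{x\}\in C\}$, which is precisely the statement that $F$ converges to $x$.

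The step I expect to be the only genuine obstacle is this last one, since it is where the mismatch between the two structures must be bridged: $\mathcal{U}$-convergence controls only two-point sets $\{x,y\}$, whereas $\mathcal{C}$-convergence demands control of arbitrarily large sets $A\cup\{x\}$. The two realizations that make it go through are that a single witness $y$---close to $x$ in $\mathcal{U}$ and lying in $f$---works simultaneously for all finite $A\subseteq f$, and that one application of the composition axiom (C3) together with downward closure (C2) upgrades the pairwise estimate $\{x,y\}\in D$ and the many-point estimate $A\cup\{y\}\in D$ into the desired $A\cup\{x\}\in C$.
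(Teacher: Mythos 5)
Your proposal is correct and follows essentially the same route as the paper: show the $\mathcal{C}$-Cauchy filter is $\mathcal{U}$-Cauchy, extract a limit $x$ from completeness of $\mathcal{U}$, and then use one application of (C3) ($D\circ D\subseteq C$) together with a single witness point $y$ that is both in a small filter element and $D$-close to $x$ to upgrade $\{x,y\}\in D$ and a many-point estimate into $A\cup\{x\}\in C$. The only cosmetic difference is that the paper shrinks the filter element so that it may take $y\in A$ directly, whereas you fix one global $y\in f\cap N(x,U_D)$ and invoke (C2) once more; both are sound.
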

\begin{proof} Suppose that $\mathcal{U}$ is complete, and let $\mathcal{F}$ be
a Cauchy filter with respect to $\mathcal{C}$. Then $\mathcal{F}$ is also Cauchy
with respect to $\mathcal{U}$, since for all $C\in \mathcal{C}$, we have
$\{\{x,y\} :x,y\in F \} \subseteq \pfin{F} \subseteq C$ for some $F\in\mathcal{F}$;
then $F\times F\subseteq U_C$. Thus $\mathcal{F}$ converges in $\mathcal{U}$ to
some element $x$, and we claim that it also converges to $x$ in $\mathcal{C}$.
To this end, fix $C\in \mathcal{C}$. Choose $D\in\mathcal{C}$ so that $D\circ D
\subseteq C$. and $F\in\mathcal{F}$ so that (a) $y \in F$ whenever $(x,y)\in U_D$
and (b) $\pfin{F}\subseteq D$. Then for all $A\in \pfin{F}$, $A\cup \{x\}\in C$.
(If $A=\varnothing$, $A\cup\{x\}\in C$ trivially. Otherwise pick $y\in A$, and
we will have $A\in D$ and $\{x,y\}\in D$, so that $A\cup \{x,y\} = A\cup\{x\}
\in C$.)\end{proof}

We end this section with two open questions:
\begin{enumerate}
\item Does the converse to Theorem \ref{sdsdsd} hold; that is, if a conformity
$(X,\mathcal{C})$ is complete, must its induced uniformity be?

\item We saw in Section \ref{sect-comp} that for any diversity $(X,\delta)$ it is
possible to embed $X$ in a complete diversity which was universal, meaning
that any uniformly continuous map from $X$ to a complete diversity factored
through the embedding. It is shown in \cite{kelley1955} that every uniformity
can be embedded in a complete uniformity. This embedding is also universal.

Is there a notion of universal completion for conformities?
\end{enumerate}

\subsection{Diversities of conformities}

Not every conformity has a countable base. For example, let $X$ be the space
of functions $f:[0,1]\to[0,1]$, and consider the ``pointwise convergence''
conformity generated by the sets
\[ C^x_\epsilon = \{ \{ f_1,\ldots,f_n \} : \diam(\{ f_1(x), \ldots, f_n(x) \}) < \epsilon \} \]
for every $\epsilon > 0$, $x\in [0,1]$. This conformity has no countable base
by Corollary \ref{toobigtoobig}, since its induced uniformity does not have
a countable base \cite{MO134421}. Thus by Theorem \ref{idliketobe}
it is not generated by any pseudodiversity.

In this section we will show that every conformity is generated by the
collection of
pseudodiversities which are uniformly continuous with respect to it, in an
appropriate sense. In the case of uniformities this is done by constructing
a so-called product uniformity; given a uniformity on a set $X$, the product
uniformity is constructed on $X\times X$. Then a given pseudometric $d$ may
or may not be uniformly continuous from the product uniformity to the
Euclidean uniformity on $\mathbb{R}$. It can be proven \cite{bourbaki1989,kelley1955}
that a uniformity $\mathcal{U}$ is exactly the uniformity generated by all
pseudometrics which are uniformly continuous from its product uniformity.

Since pseudodiversities are functions on finite sets rather than pairs,
given a conformity on a set $X$ we seek a conformity on $\pfin{X}$ from
which to judge uniform continuity of pseudodiversities.

In fact, such a conformity exists for which we can prove the same result:
given a conformity $(X, \mathcal{C})$, define the \defn{power conformity}
$\mathcal{C}^P$ as the conformity on $\pfin{X}$ generated by the sets
\begin{equation}
 \label{acidisgroovy}
 C_u = \left\{ \{ A_1, \ldots, A_n \} : n \leq 1\text{ or } \bigcup_{i=1}^n A_i \in u \right\}
\end{equation}
where $u$ ranges over all members of $\mathcal{C}$.

\begin{lmma} A power conformity is a conformity.\end{lmma}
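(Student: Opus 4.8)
The plan is to verify the three conformity axioms directly on the generating sets $C_u$, exactly as is done above for the induced uniformity. First I would record that these sets form a filter base: since $\mathcal{C}$ is a filter, $u\cap v\in\mathcal{C}$ whenever $u,v\in\mathcal{C}$, and a short check gives $C_u\cap C_v=C_{u\cap v}$, so the family is closed under finite intersection; moreover each $C_u$ contains every singleton collection, hence is nonempty. Thus the $C_u$ generate a filter $\mathcal{C}^P$ on $\pfin{\pfin{X}}$, and since a refinement of a generator automatically refines all its supersets, it suffices to check (C1)--(C3) against the $C_u$ themselves.

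Axioms (C1) and (C2) are immediate. For (C1), a singleton collection $\{A\}$ (with $A\in\pfin{X}$) has $n=1\leq 1$, so it lies in every $C_u$ by the first clause of the definition. For (C2), if $\mathcal{B}\subseteq\mathcal{A}$ and $\mathcal{A}\in C_u$, then $\bigcup\mathcal{B}\subseteq\bigcup\mathcal{A}$; when $|\mathcal{A}|\geq 2$ we have $\bigcup\mathcal{A}\in u$, and since $u$ is itself downward closed by (C2) for $\mathcal{C}$ we get $\bigcup\mathcal{B}\in u$, so $\mathcal{B}\in C_u$. The remaining cases, where $|\mathcal{A}|\leq 1$ (and hence $|\mathcal{B}|\leq 1$), are trivial.

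The substance of the lemma is (C3). Given $u\in\mathcal{C}$, I would invoke (C3) for $\mathcal{C}$ to choose $w\in\mathcal{C}$ with $w\circ w\subseteq u$, and claim $C_w\circ C_w\subseteq C_u$. So take collections $\mathcal{A},\mathcal{B}\in C_w$ with $\mathcal{A}\cap\mathcal{B}\neq\varnothing$, say $K\in\mathcal{A}\cap\mathcal{B}$, and aim to show $\mathcal{A}\cup\mathcal{B}\in C_u$. If $|\mathcal{A}|\leq 1$ then $\mathcal{A}\cup\mathcal{B}=\mathcal{B}$, and one checks $C_w\subseteq C_u$ (each nonempty member of $w$ lies in $w\circ w\subseteq u$), so we are done; the case $|\mathcal{B}|\leq 1$ is symmetric. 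Otherwise $\bigcup\mathcal{A},\bigcup\mathcal{B}\in w$, both contain $K$, so $(\bigcup\mathcal{A})\cap(\bigcup\mathcal{B})\supseteq K$; applying the composition hypothesis for $\mathcal{C}$ then gives $\bigcup(\mathcal{A}\cup\mathcal{B})=(\bigcup\mathcal{A})\cup(\bigcup\mathcal{B})\in w\circ w\subseteq u$, whence $\mathcal{A}\cup\mathcal{B}\in C_u$.

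The step I expect to be the main obstacle is precisely this last one, where the composition axiom of $\mathcal{C}$ is invoked: it requires $(\bigcup\mathcal{A})\cap(\bigcup\mathcal{B})\neq\varnothing$, which is guaranteed only when the shared member $K$ is nonempty. The argument therefore forces us to read the collections in $\pfin{\pfin{X}}$ as built from nonempty finite sets (equivalently, to treat $\varnothing$ as ignorable when it appears in a collection); otherwise two collections whose only common member is $\varnothing$ could join far-apart singletons and escape $C_u$, defeating (C3). Pinning down this convention, and confirming it disturbs neither (C1) nor (C2), is the one genuinely delicate point; everything else is bookkeeping.
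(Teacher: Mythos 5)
Your proof is correct in substance and follows the same route as the paper's: establish that the $C_u$ form a filter base via $C_u\cap C_v=C_{u\cap v}$, observe that (C1) and (C2) are immediate from the first clause of the definition and from (C2) for $\mathcal{C}$, and prove (C3) by choosing $w$ with $w\circ w\subseteq u$ and splitting into cases according to whether each collection has at most one member. The paper's own case analysis (its cases (a), (b), (c)) matches yours exactly.

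The ``delicate point'' you flag at the end is not a defect of your write-up but a genuine gap shared by the paper's proof. In the main case the paper simply asserts that $\bigcup_i A_i$ and $\bigcup_j B_j$ are sets in $v$ with nonempty intersection, but this only follows when the common member witnessing $\{A_i\}\cap\{B_j\}\neq\varnothing$ is itself a nonempty subset of $X$. If $\varnothing$ is admitted as a member of collections, (C3) actually fails: for any $v\in\mathcal{C}$, the collections $\{\varnothing,\{x\}\}$ and $\{\varnothing,\{y\}\}$ lie in $C_v$ (since $\{x\},\{y\}\in v$ by (C1)), they intersect in the common member $\varnothing$, and their union $\{\varnothing,\{x\},\{y\}\}$ lies in $C_u$ only if $\{x,y\}\in u$, which need not hold for distant $x,y$. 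So the power conformity must be taken on $\pfin{X}\setminus\{\varnothing\}$, exactly as you suggest; this costs nothing for (C1) and (C2). One should then also note that the later proof of Theorem \ref{underthesea} uses the collection $\{A,\varnothing\}$ to extract $\delta(A)<\epsilon$ from uniform continuity; under the repaired convention this is replaced by $\{A,\{x\}\}$ for any $x\in A$, which lies in $C_u$ whenever $A\in u$ and still gives $|\delta(A)-\delta(\{x\})|=\delta(A)<\epsilon$.
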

\begin{proof} First, the $C_u$'s form a filter base since $C_u\cap C_v
= C_{u\cap v} \in \mathcal{C}^P$ for any $C_u$, $C_v\in\mathcal{C}^P$.
For all $A\in\pfin{X}$, $\{A\}$ is in every $C_u$ by definition. It is
immediate that whenever $\{A_i\}$ is in $C_u$, so is every subset of
$\{A_i\}$.

Finally, every $C_u$ has a $C_v$ with $C_v\circ C_v \subseteq C_u$: choose
$v$ with $v\circ v \subseteq u$ in $\mathcal{C}$. If $\{A_i\}_{i=1}^n$,
$\{B_i\}_{i=1}^m$ are in $C_v$ with some $A_i$ equal to some $B_j$, then
(a) $m \leq 1$ and $n \leq 1$, so their union has at most one element and
therefore must lie in $C_u$, (b) exactly one of $m \leq 1$ or $n \leq 1$,
in which case one of the sets is a subset of the other, so their union lies
in $v$ (and therefore $u$), or (c) $m > 1$ and $n > 1$, so the sets
$\bigcup_{i=1}^n A_i$ and $\bigcup_{i=1}^m B_i$ are sets in $v$ with nonempty
intersection. Then since $v\circ v \subseteq u$, their union lies in $u$. In
every case we have $\{A_i\}_{i=1}^n\cup\{B_i\}_{i=1}^m \in C_u$.
\end{proof}

\begin{thrm} Let $(X, \mathcal{C})$ be a conformity. A pseudodiversity
$\delta$ is uniformly continuous from the $\mathcal{C}^P$ to $(\mathbb{R},
\diam)$ iff the set $V_\epsilon = \{ A: \delta(A) < \epsilon \}$ is in
$\mathcal{C}$ for each $\epsilon > 0$. \label{underthesea}
\end{thrm}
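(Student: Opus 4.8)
The plan is to unwind the definition of uniform continuity for the iterated conformity $\mathcal{C}^P$ and, in each direction, reduce the statement to a single set containment. Recall that the diameter conformity on $\mathbb{R}$ has base $E^\eta = \{ S \in \pfin{\mathbb{R}} : \diam(S) \leq \eta \}$, and that since $\mathbb{R}$ is totally ordered, $\diam(\{\delta(A_1),\ldots,\delta(A_n)\}) = \max_i \delta(A_i) - \min_i \delta(A_i)$. Writing $\delta^{-1}(E^\eta)$ for the componentwise preimage $\{ \{A_1,\ldots,A_n\} \in \pfin{\pfin{X}} : \diam(\{\delta(A_1),\ldots,\delta(A_n)\}) \leq \eta \}$, uniform continuity of $\delta$ is precisely the assertion that $\delta^{-1}(E^\eta) \in \mathcal{C}^P$ for every $\eta > 0$. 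Since the sets $C_u$ form a base for $\mathcal{C}^P$, this is equivalent to requiring, for every $\eta > 0$, some $u \in \mathcal{C}$ with $C_u \subseteq \delta^{-1}(E^\eta)$; I will work with this reformulation throughout. It is worth noting in advance that neither direction uses (C3) or the composition operation --- only monotonicity of $\delta$, the vanishing of $\delta$ on sets of size at most one, and the filter axioms for $\mathcal{C}$.

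For the ($\Leftarrow$) direction I would assume each $V_\epsilon \in \mathcal{C}$, fix $\eta > 0$, and simply take $u = V_\eta \in \mathcal{C}$. It then remains to check $C_u \subseteq \delta^{-1}(E^\eta)$, and monotonicity does all the work: for $\{A_1,\ldots,A_n\} \in C_u$ with $n > 1$ we have $B := \bigcup_i A_i \in u$, so $\delta(B) < \eta$, and each $A_i \subseteq B$ gives $0 \leq \delta(A_i) \leq \delta(B) < \eta$; hence the diameter of $\{\delta(A_1),\ldots,\delta(A_n)\}$ is at most $\delta(B) \leq \eta$. The cases $n \leq 1$ are trivial since the diameter is then $0$. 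This yields $C_u \subseteq \delta^{-1}(E^\eta)$ and therefore uniform continuity.

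The ($\Rightarrow$) direction carries the one genuinely new idea. Given uniform continuity and a fixed $\epsilon > 0$, I would pick $\eta < \epsilon$ and obtain $u \in \mathcal{C}$ with $C_u \subseteq \delta^{-1}(E^\eta)$; the goal is $u \subseteq V_\epsilon$, after which closure of the filter $\mathcal{C}$ under supersets gives $V_\epsilon \in \mathcal{C}$. The subtlety is that the singleton family $\{A\}$ always lies in $C_u$ but has diameter $0$, so it reveals nothing about $\delta(A)$; to extract $\delta(A)$ one must test $C_u$ against a two-element family. I would therefore probe with the pair $\{A, \{x\}\}$ for an arbitrary $x \in A$ (with $A \in u$ nonempty): its union is $A \in u$, so the pair lies in $C_u$, and since $\delta(\{x\}) = 0$ we obtain $\delta(A) = \diam(\{\delta(A),0\}) \leq \eta < \epsilon$. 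Combined with $\delta(\varnothing) = 0$ and the vanishing of $\delta$ on singletons, this gives $u \subseteq V_\epsilon$ and completes the proof.

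The computations are routine once the framework is fixed, so the main obstacle is conceptual rather than technical: correctly reading uniform continuity of $\delta$ as a statement about a map out of the doubly-iterated power set $\pfin{\pfin{X}}$, and spotting that the probing family $\{A,\{x\}\}$ is exactly what converts the one-dimensional diameter on $\mathbb{R}$ back into the value $\delta(A)$.
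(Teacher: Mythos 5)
Your proof is correct and follows essentially the same route as the paper's: in the easy direction you take $u=V_\eta$ and use monotonicity of $\delta$, and in the converse you probe $C_u$ with a two-element family whose second member has $\delta$-value zero and whose union is still $A$ (you use $\{A,\{x\}\}$ where the paper uses $\{A,\varnothing\}$, an immaterial variation). Your write-up is in fact slightly more careful than the paper's on two points --- treating general $n$-element families rather than just pairs in the forward direction, and inserting $\eta<\epsilon$ to manage the strict/non-strict inequality in the base sets --- but the underlying argument is the same.
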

\begin{proof} First, suppose that every $V_\epsilon$ is in $\mathcal{C}$.
For each $\epsilon > 0$, the set
\[ C_u = \left\{ \{ A_1, \ldots, A_n \} : n \leq 1\text{ or } \delta\left(\bigcup_{i=1}^n A_i\right) < \epsilon \right\} \]
is in $\mathcal{C}^P$. (Notice it has the form of \eqref{acidisgroovy}
with $u = V_\epsilon$.) Let $\{A, B\} \in C_\epsilon$; then $\delta(A)
\leq \delta(A \cup B) < \epsilon$ and similarly $\delta(B) < \epsilon$.
Thus $|\delta(A) - \delta(B)| < \epsilon$, so $\delta$ is uniformly
continuous.

Conversely, suppose $\delta$ is uniformly continuous. Then for any
$\epsilon > 0$, there exists some $u \in \mathcal{C}$, such that every
$\{A_1,\ldots,A_n\} \in C_u$ satisfies $\sup_{i,j} |\delta(A_i) - \delta(A_j)| < \epsilon$.
Since for any $A\in u$, the set $\{A,\varnothing\}$ lies in $C_u$,
this implies that $\delta(A) < \epsilon$, which in turn implies that
$u\subseteq V_\epsilon$, which finally implies that $V_\epsilon$ is
in $\mathcal{C}$.
\end{proof}

\begin{corl} Every conformity is generated by the pseudodiversities
which are uniformly continuous from its power conformity to $(\mathbb{R},
\diam)$.\end{corl}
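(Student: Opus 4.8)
The plan is to show mutual containment of two conformities on $X$: the original conformity $\mathcal{C}$, and the conformity $\mathcal{C}'$ generated by the family of all pseudodiversities that are uniformly continuous from the power conformity $\mathcal{C}^P$ to $(\mathbb{R},\diam)$. By Theorem \ref{underthesea}, a pseudodiversity $\delta$ belongs to this generating family precisely when $V_\epsilon=\{A:\delta(A)<\epsilon\}\in\mathcal{C}$ for every $\epsilon>0$; equivalently, the generating sets $\delta^{-1}[0,\epsilon]$ of $\mathcal{C}'$ are exactly the members of $\mathcal{C}$ arising from such $\delta$. This immediately gives the easy containment $\mathcal{C}'\subseteq\mathcal{C}$, since every generator of $\mathcal{C}'$ already lies in $\mathcal{C}$, and a conformity is a filter (closed under supersets and finite intersections), so the whole filter $\mathcal{C}'$ sits inside $\mathcal{C}$.

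The substantive direction is $\mathcal{C}\subseteq\mathcal{C}'$: I must show every $C\in\mathcal{C}$ contains a generator $\delta^{-1}[0,\epsilon]$ of $\mathcal{C}'$ for some uniformly continuous $\delta$. First I would reduce to the case where $C$ comes from a countable base. Fix $C\in\mathcal{C}$ and use (C3) repeatedly to extract a countable chain $C=C_0\supseteq C_1\supseteq\cdots$ with $C_i\circ C_i\circ C_i\subseteq C_{i-1}$, exactly as arranged in Lemma preceding Theorem \ref{idliketobe}. This sequence generates a sub-conformity $\mathcal{C}_0\subseteq\mathcal{C}$ with a countable base. By Theorem \ref{idliketobe}, $\mathcal{C}_0$ is generated by a single pseudodiversity $\delta$, and the construction there yields $\delta\leq\delta'\leq4\delta$ with $\delta'^{-1}[0,2^{-k}]=C_k$, so in particular $\delta^{-1}[0,\epsilon]\subseteq C_0=C$ for small enough $\epsilon$.

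It then remains to verify that this $\delta$ is actually one of the admissible generators of $\mathcal{C}'$, i.e.\ that it is uniformly continuous from $\mathcal{C}^P$ to $(\mathbb{R},\diam)$. By Theorem \ref{underthesea} this amounts to checking $V_\epsilon=\{A:\delta(A)<\epsilon\}\in\mathcal{C}$ for every $\epsilon>0$. Since $\delta$ generates $\mathcal{C}_0\subseteq\mathcal{C}$, each set $\delta^{-1}[0,\eta]$ lies in $\mathcal{C}_0$ and hence in $\mathcal{C}$; as $V_\epsilon\supseteq\delta^{-1}[0,\epsilon/2]$ and $\mathcal{C}$ is closed upward under (C2)-compatible supersets, $V_\epsilon\in\mathcal{C}$ as required. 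Therefore $\delta$ is an admissible pseudodiversity, $\delta^{-1}[0,\epsilon]$ is a generator of $\mathcal{C}'$ contained in $C$, and so $C\in\mathcal{C}'$.

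I expect the main obstacle to be the first reduction step: cleanly extracting from an arbitrary $C\in\mathcal{C}$ a countable base with the triple-composition nesting, while keeping the resulting sub-conformity inside $\mathcal{C}$, so that Theorem \ref{idliketobe} can be invoked to manufacture a genuine pseudodiversity. Once that pseudodiversity is in hand, confirming its uniform continuity via Theorem \ref{underthesea} and the containment $\delta^{-1}[0,\epsilon]\subseteq C$ is routine filter bookkeeping.
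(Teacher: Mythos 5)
Your proposal is correct and follows essentially the same route as the paper: the containment of the generated conformity in $\mathcal{C}$ comes directly from Theorem \ref{underthesea}, and the reverse containment comes from embedding each $C\in\mathcal{C}$ in a countably-based subconformity (via repeated use of (C3) and the lemma preceding Theorem \ref{idliketobe}) and applying Theorem \ref{idliketobe} to produce a uniformly continuous generating pseudodiversity. The paper's version is just a terser statement of the same two steps.
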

\begin{proof} Let $\mathcal{C}$ be a conformity, $\mathcal{D}$ the conformity
generated by the pseudodiversities which are uniformly continuous from the
power conformity to $(\mathbb{R},\diam)$. By Theorem \ref{idliketobe} we have
$\mathcal{C}\subseteq\mathcal{D}$, since every member $u$ of $\mathcal{C}$ is
in a countably-based subconformity of $\mathcal{C}$. (Take $u_0 = u$, $u_i$
such that $u_i\circ u_i\subseteq u_{i-1}$, $i>0$ as a base.)

Then by Theorem \ref{underthesea}, every pseudodiversity which is uniformly
continuous generates a subset of $\mathcal{C}$; that is, $\mathcal{D}\subseteq
\mathcal{C}$.
\end{proof}

We saw at the beginning of this section that some conformities can be generated
by sets of the form
$\left\{\delta^{-1}_\alpha[0, \epsilon]\right\}_{\alpha\in\mathcal{A},\epsilon>0}$,
where $\mathcal{A}$ is some collection of pseudodiversities, $\epsilon > 0$.
What we have just shown is that all conformities are generated in this
way, so that we may \emph{define} a conformity as a filter generated in
this way by some collection of diversities.

\section{Category theory}

In \cite{bryant+tupper2012}, Bryant and Tupper introduced the category \textbf{Dvy}
whose objects are diversities and morphisms nonexpansive maps (functions $f$ between
diversities $(X,\delta)$ and $(Y,\rho)$ such that $\rho(f(A))\leq \delta(A)$ for
all finite $A\subseteq X$). Compare with \textbf{Met} \cite{adamek+herrlich+strecker1990},
whose objects are metric spaces and morphisms nonexpansive maps (functions $f$
between metric spaces $(X,d)$ and $(Y,p)$ such that $p(f(x), f(y)) \leq d(x,y)$
for all $x,y\in X$).

It is not hard to see that for both metric spaces and diversities, nonexpansive
maps are uniformly continuous. In the metric case, they are also continuous.

We introduce the category \textbf{Conf}, whose objects are conformities and
morphisms uniformly continuous functions. Compare with \textbf{Unif} \cite{adamek+herrlich+strecker1990},
whose objects are uniformities and morphisms uniformly continuous functions.

We also recall \textbf{Top}, whose objects are topological spaces and morphisms
continous maps, and \textbf{CAT}, whose objects are categories and morphisms
are functors (maps between categories which preserve composition).

With these categories in hand, we can summarize the relationships between
diversities, conformities and metric spaces by observing that the maps in
the following diagram in \textbf{CAT} are functors, and that the diagram
as a whole commutes.

\begin{center}
\begin{tikzcd}[column sep=large,row sep=large]
	~	& \textbf{Met} \arrow{dl}{t_m} \arrow{dd}{u_d}	&& \textbf{Div} \arrow{ll}{r_d} \arrow{dd}{u_\delta}	\\
  \textbf{Top}	&						&& \\
	~	& \textbf{Unif} \arrow{ul}{t_u}			&& \textbf{Conf} \arrow{ll}{r_\delta}
\end{tikzcd}
\end{center}
where:
\begin{itemize}
\item $r_\delta$ maps conformities to their induced uniform spaces;
\item $r_d$ maps diversities to their induced metric spaces;
\item $u_\delta$ maps diversities to the conformities that they generate;
\item $u_d$ maps metric spaces to the uniform spaces that they generate;
\item $t_m$ maps metric spaces to their metric topologies;
\item and $u_m$ maps uniform spaces to their uniform topologies.
\end{itemize}
Notice that each functor leaves the underlying sets unchanged, e.g. $u_d$ maps
a metric space $(X,d)$ to a uniform space $(X, \mathcal{U})$. The morphisms
are also unchanged as functions, e.g., a nonexpansive map $f:X\to Y$ in \textbf{Met}
is considered a continuous map in \textbf{Top} under $t_m$ and a uniformly
continous map in \textbf{Unif} under $u_d$, but it is the same function from
the set $X$ to the set $Y$ in all cases.

%
%
%
\bibliographystyle{amsplain}
\bibliography{asp}

\end{document}